\theoremstyle{plain}
\newtheorem{thm}{Theorem}[section]
\newtheorem{prop}[thm]{Proposition}
\newtheorem{lemma}[thm]{Lemma}
\newtheorem{cor}[thm]{Corollary}
\theoremstyle{definition}
\newtheorem{defn}[thm]{Definition}
\newtheorem{disc}[thm]{Discussion}
\newtheorem{rem}[thm]{Remark}
\newtheorem{exam}[thm]{Example}
\newtheorem{ques}[thm]{Question}
\theoremstyle{remark}
\newtheorem{remk}[thm]{Remark}
\newcounter{item}
\newenvironment{alist}[1][1]{\begin{list}
  {\textup{(\alph{item})}}{\usecounter{item} \setcounter{item}{#1}\addtocounter{item}{-1}
  \setlength{\itemsep}{0ex}
  \setlength{\topsep}{0ex} \setlength{\parsep}{0ex} \setlength{\labelwidth}{15mm}
  \setlength{\leftmargin}{10mm} } }{\end{list}}
\newenvironment{nlist}[1][1]{\begin{list}
  {\textup{(\arabic{item})}}{\usecounter{item} \setcounter{item}{#1}\addtocounter{item}{-1}
  \setlength{\itemsep}{0ex}
  \setlength{\topsep}{0ex} \setlength{\parsep}{0ex} \setlength{\labelwidth}{15mm}
  \setlength{\leftmargin}{10mm} } }{\end{list}}
\newcommand{\ds}{\displaystyle}
\newcommand{\bx}{{\bf x}}
\newcommand{\nn}{\mathbb{N}}
\newcommand{\zz}{\mathbb{Z}}
\newcommand{\cc}{\mathbb{C}}
\newcommand{\im}{\textup{Im}}
\newcommand{\incl}{\hookrightarrow}
\newcommand{\dirlim}{\varinjlim}
\newcommand{\ulim}{\textnormal{ulim}}
\newcommand{\Los}{\L{}os}
\newcommand{\cl}{\textnormal{cl}}
\newcommand{\regseq}{regular sequence}
\newcommand{\sop}{system of parameters}
\newcommand{\sops}{systems of parameters}
\newcommand{\charp}{characteristic $p>0$}
\newcommand{\echarz}{equal characteristic zero}
\newcommand{\CM}{Cohen-Macaulay}
\newcommand{\cal}{\mathcal}
\newcommand{\fg}{finitely-generated}
\begin{document}

\title[Big C-M and Seed Algebras \ldots\ Via Ultraproducts]{Big Cohen-Macaulay and Seed Algebras in Equal Characteristic Zero Via Ultraproducts}

\author{Geoffrey D. Dietz}
\address{Department of Mathematics,
Gannon University, Erie, PA 16541}
\email{gdietz@member.ams.org}

\author{Rebecca R.G.}
\address{Mathematics Department, Syracuse University, Syracuse, NY 13244}
\email{rirebhuh@syr.edu}
\thanks{The second author was partially supported by the National Science Foundation [grant numbers DGE 1256260, DMS 1401384].}

\date{August 30, 2016}

\keywords{big Cohen-Macaulay algebras, tight closure, equal characteristic zero, ultraproducts}
\subjclass[2010]{Primary 13C14; Secondary 13A35}

\begin{abstract}
Let $R$ be a commutative, local, Noetherian ring. In a past article, the first author developed a theory of $R$-algebras, termed seeds, that can be mapped to balanced big Cohen-Macaulay $R$-algebras. In prime characteristic $p$, seeds can be characterized based on the existence of certain colon-killers, integral extensions of seeds are seeds, tensor products of seeds are seeds, and the seed property is stable under base change between complete, local domains. As a result, there exist directed systems of big Cohen-Macaulay algebras over complete, local domains. In this work, we will show that these properties can be extended to analogous results in equal characteristic zero. The primary tool for the extension will be the notion of ultraproducts for commutative rings as developed by Schoutens and Aschenbrenner. 
\end{abstract}

\maketitle

For a local ring $(R,\mathfrak{m})$, a big Cohen-Macaulay $R$-algebra $B$ is an $R$-algebra such that some system of parameters for $R$ forms a regular sequence on $B$ with the extra property that $\mathfrak{m}B \neq B$ to ensure that $B$ is not trivial. If this is true for every system of parameters for $R$, then $B$ is a balanced big \CM\ algebra. Big Cohen-Macaulay algebras were first shown to exist in \cite{HH92} using characteristic $p$ methods related to tight closure theory (see \cite{HH90}). Hochster and Huneke proved that the absolute integral extension $R^+$ is a balanced big Cohen-Macaulay $R$-algebra when $R$ is an excellent, local, domain of prime characteristic. These results were extended in further articles. For example, in \cite{Ho94} Hochster makes explicit use of tight closure to provide an alternative proof of the existence of big Cohen-Macaulay algebras in prime characteristic based on the notion of algebra modifications. In \cite{HH95} Hochster and Huneke proved the existence of balanced big Cohen-Macaulay algebras for rings containing a field of characteristic zero and proved the ``weakly functorial'' existence of big Cohen-Macaulay algebras, i.e., given complete local domains of equal characteristic $R\to S$, there exists a balanced big Cohen-Macaulay $R$-algebra $B$ and a balanced big Cohen-Macaulay $S$-algebra $C$ such that $B\to C$ extends the map $R\to S$. The results in equal characteristic zero are based on reductions to prime characteristic that rely on Artin approximation.

In \cite{D07}, the first author introduced the notion of seed algebras. Given a local, Noetherian ring $R$, a seed algebra over $R$ is an $R$-algebra $S$ such that there exists an $R$-algebra map $S\to B$ where $B$ is a balanced big Cohen-Macaulay $R$-algebra. Some of the noteworthy results in that paper for rings of prime characteristic are Theorem~4.8 (which characterizes seeds based on durable, colon-killers), Theorem~6.9 (which shows that integral extensions of seeds are seeds and thus partially generalizes Hochster and Huneke's result for $R^+$), Theorem~7.8 (which shows that every seed can be mapped to an absolutely integrally closed, $\mathfrak{m}$-adically separated, quasilocal, balanced big Cohen-Macaulay algebra domain), Theorem~8.4 (which shows that tensor products of seeds are seeds and thus that any two big Cohen-Macaulay algebras map to a common big Cohen-Macaulay algebra), and Theorem~8.10 (which shows that the seed property is stable under base change and thus generalizes the weakly functorial existence result of Hochster and Huneke). 

Our aim in this paper is to extend the results of \cite{D07} to equal characteristic zero using the notion of ultraproducts of rings as developed by Schoutens and Aschenbrenner in works such as \cite{Sch03}, \cite{AS07}, and \cite{Sch10}, to name just a few. Although many reductions to characteristic $p$ have relied more directly upon various forms of Artin approximation, we decided that the theory of ultraproducts better suited the arguments here. Theorems 3.3, 3.5, 4.2, 4.3, and 5.1 in this work will serve as analogues to the above mentioned Theorems 8.4, 8.10, 4.8, 7.8, and 6.9 (respectively) from \cite{D07}. As an added bonus, we do not need a hypothesis of completeness on the rings in this work as was needed in \cite{D07}. The equal characteristic zero local domains $R$ in this work (complete or not) will each have an associated \textit{Lefschetz hull} $\mathscr{D}(R)$ which is an ultraproduct of rings $R_w$ \textit{almost all} of which are complete, local domains of prime characteristic. See Discussion~\ref{hypoth} and Theorem~\ref{transfer} for details.\ on the Lefschetz hull and its properties. We will then prove that properties of the $R_w$ transfer back to $R$, even without the hypothesis of completeness on $R$.

In the first section, we will review some properties of ultraproducts and big \CM\ algebras. With those tools established, we will proceed to discuss the notion of seeds in equal characteristic zero and then prove the analogues of the results named above from \cite{D07} in the following sections.

Throughout this paper, all rings are assumed to be commutative with identity, but not necessarily Noetherian unless stated explicitly. We will also follow the convention that ``quasilocal'' means that a ring (not necessarily Noetherian) has a unique maximal ideal while ``local'' means quasilocal and Noetherian.

\section{Overview of Ultraproducts and Big \CM\ Algebras}

We begin our efforts by reviewing background material necessary for our results. The main area of focus will be the topics related to ultraproducts in commutative algebra. The primary references will be the works of Schoutens and Aschenbrenner. See \cite{Sch10}, \cite{AS07}, and \cite{Sch03} for examples. Some of the results needed are listed below. References are provided to relevant sections of the works mentioned above.

\begin{defn}[(2.1.1), \cite{Sch10}]
Let $W$ be an infinite set. A \textit{filter} on $W$ is a proper, nonempty collection $\cal{W}$ of subsets of $W$ that is closed under finite intersections 
and taking supersets. If we add the conditions that the empty set is not in $\cal{W}$ and that for every subset $A$ of $W$, either $A \in \cal{W}$ or $W - A \in \cal{W}$, then $\cal{W}$ is an \textit{ultrafilter}. We say that $\cal{W}$ is \textit{principal} if it contains a least element, i.e., a set contained in every other element of $\cal{W}$. This least element in a principal ultrafilter will always be a singleton set. An ultrafilter that does not have a least element is \textit{non-principal}. Note that all of the sets in a non-principal ultrafilter are infinite, as intersections of finite sets always result in smaller sets and eventually lead to the empty set.

Following Schoutens, we will say (equivalently) that a \textit{non-principal ultrafilter} on $W$ is a collection $\cal{W}$ of infinite subsets of $W$ that is closed under finite intersection and such that for any $W' \subseteq W$, either $W' \in \cal{W}$ or $W-W' \in \cal{W}$. Note that the requirement that all sets in $\cal{W}$ be infinite implies that the empty set is not in $\cal{W}$, and that $\cal{W}$ is not principal as it cannot contain a singleton set. Also, if $W' \in \cal{W}$ and $W' \subseteq W''$, then under this notion we will have $W'' \in \cal{W}$ as otherwise $W-W'' \in \cal{W}$. But $W-W''$ is disjoint from $W'$, so this would imply that the empty set is in $\cal{W}$.

All ultrafilters in this work will be non-principal ultrafilters. 
\end{defn}


\begin{exam}
Let $W$ be any infinite set, and take the collection of subsets of $W$ whose complements are finite. This 
can be expanded to a non-principal ultrafilter via the axiom of choice. Hence there is always at least one ultrafilter on any infinite set $W$. See Section 6.2 of \cite{Ho93} for more details.
\end{exam}

\begin{rem}
In what follows, we will generally work with a fixed non-principal ultrafilter on a given set $W$. In the later sections, $W$ will be the set of positive prime integers, but the specific ultrafilter $\cal{W}$ on $W$ will not be described as it mostly does not play a determining role in the results. The presence of a non-principal ultrafilter is all that is needed.
\end{rem}

The purpose of the ultrafilter is to specify which subsets of $W$ are ``large" and thus develop a notion of when a property is true for ``almost all" elements in $W$. 

\begin{defn}
Let $W$ be an infinite set, and $\cal{W}$ a non-principal ultrafilter on $W$. We say that a property holds for \textit{almost all} $w \in W$ if it holds for all $w$ in some $W' \in \cal{W}$.
\end{defn}

In the later sections, we will prove a number of results in \echarz\ by appealing to a similar result already known to be true in \charp. The idea is that many rings of \echarz\ have the structure of an ultraproduct, defined below, of rings of \charp, and many properties of the factors transfer to their ultraproduct.

\begin{defn}[(2.1.2)--(2.1.4), \cite{Sch10}]
Let $W$ be an infinite set with a non-principal ultrafilter $\mathcal{W}$. For each $w \in W$, take a ring $A_w$. The \textit{ultraproduct} $A_\natural$ of the $A_w$ (with respect to  $\cal{W}$) is the quotient $\left(\Pi_w A_w\right)/I_{null}$, where $I_{null}$ is the ideal of elements $(x_w)_{w\in W}$ of $\Pi_w A_w$ where $x_w =0$ for almost all $w$. Any such ring $A_\natural$ is called an \textit{ultraring}.

Given an element $(a_w) \in \Pi_w A_w$, we refer to its image $a_\natural$ in $A_\natural$ as the ultraproduct of the $a_w$, and denote it by $\ulim_w\ a_w$.

We can take ultraproducts of $A_w$-modules as well: the ultraproduct of the $A_w$-modules $M_w$ is $M_\natural=\left(\Pi_w M_w\right)/M_{null}$, where $M_{null}$ consists of all elements of $\Pi_w M_w$ almost all of whose entries are 0. $M_\natural$ has the structure of an $A_\natural$-module and will be called an \textit{ultramodule}.
\end{defn}

Maps on the components of an ultraproduct lead to a map on the ultraproduct.

\begin{defn}[(2.1.7), \cite{Sch10}]
Given rings $A_w$ and $B_w$ and maps $f_w:A_w \to B_w$, the ultraproduct $f_\natural$ of the $f_w$ is defined to be the map $f_\natural(\ulim_w\ a_w)=\ulim_w\ f_w(a_w)$. We call $f_\natural$ a \textit{map of ultrarings}. 

Similarly, given $A_w$-modules $M_w$ and $N_w$ and maps $f_w:M_w \to N_w$, the ultraproduct $f_\natural$ of the $f_w$ is defined by $f_\natural(\ulim_w\ m_w)=\ulim_w\ f_w(m_w)$. We call $f_\natural$ a \textit{map of ultramodules}.
\end{defn}



The following theorem is one of the main tools for working with ultraproducts.

\begin{thm}[\Los's Theorem, (2.3.1), \cite{Sch10}]
Let $S$ be a system of equations
\[f_1=f_2=\ldots=f_s=0\] and inequalities
\[g_1 \ne 0,g_2 \ne 0, \ldots,g_t \ne 0\]
with $f_i,g_j \in \zz[x_1,\ldots,x_n]$. Then an element $a_\natural \in A_\natural$ is a solution of $S$ if and only if $a_w$ is a solution of $S$ in $A_w$ for almost all $w$.
\end{thm}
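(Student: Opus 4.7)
The plan is to reduce the statement about a whole system $S$ to the statement about a single polynomial condition, and then observe that finite intersections of sets in $\mathcal{W}$ remain in $\mathcal{W}$. So first I would verify the atomic cases separately: one polynomial equation $f(\bx) = 0$, and one inequality $g(\bx) \ne 0$.

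For the equation case, I would start from the definition of the ring structure on $A_\natural = (\prod_w A_w)/I_{null}$, which tells us directly that both addition and multiplication of ultraelements are computed componentwise modulo $I_{null}$. By induction on the number of monomials of $f$ (or more efficiently on the construction of $f$ from $+, \cdot$ and integer coefficients), this yields the key identity $f(a_\natural) = \ulim_w f(a_w)$ in $A_\natural$, where $a_\natural = (\ulim_w a_{w,1}, \ldots, \ulim_w a_{w,n})$. Then $f(a_\natural) = 0$ in $A_\natural$ is equivalent, by the definition of $I_{null}$, to $\{w : f(a_w) = 0\} \in \mathcal{W}$, i.e. to $f(a_w) = 0$ for almost all $w$.

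For the inequality case, the same identity $g(a_\natural) = \ulim_w g(a_w)$ holds, but now I need $g(a_\natural) \ne 0$. Let $W_g = \{w : g(a_w) \ne 0\}$. By the previous paragraph, $g(a_\natural) = 0$ iff $W - W_g \in \mathcal{W}$. Here is the one place the \emph{ultrafilter} hypothesis (as opposed to mere filter) is essential: exactly one of $W_g$ and $W - W_g$ lies in $\mathcal{W}$, so $g(a_\natural) \ne 0$ iff $W_g \in \mathcal{W}$, i.e. iff $g(a_w) \ne 0$ for almost all $w$. This will be the main conceptual step to flag, since it is where the specific choice of axiom on $\mathcal{W}$ gets used.

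Finally I would combine the atomic cases. For the ``if'' direction, suppose $a_w$ satisfies $S$ for all $w$ in some $W^\ast \in \mathcal{W}$; then for each $i$ and $j$ the sets $\{w : f_i(a_w) = 0\}$ and $\{w : g_j(a_w) \ne 0\}$ contain $W^\ast$, hence lie in $\mathcal{W}$ (supersets of sets in $\mathcal{W}$ are in $\mathcal{W}$), so by the atomic cases each of $f_i(a_\natural) = 0$ and $g_j(a_\natural) \ne 0$ holds in $A_\natural$. For the ``only if'' direction, each individual condition gives a set $W_i, W'_j \in \mathcal{W}$ on which the corresponding equation or inequality holds, and since $s + t$ is finite, $W^\ast := W_1 \cap \cdots \cap W_s \cap W'_1 \cap \cdots \cap W'_t$ is again in $\mathcal{W}$ by closure under finite intersection, and on $W^\ast$ every condition of $S$ holds simultaneously. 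Apart from the ultrafilter step in the inequality case, the rest is bookkeeping directly from the definitions already recorded above.
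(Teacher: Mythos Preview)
Your argument is correct and is the standard elementary proof: reduce to the identity $f(a_\natural)=\ulim_w f(a_w)$ by induction on the polynomial structure, use the definition of $I_{null}$ for equations, use the ultrafilter dichotomy for inequalities, and finish with closure under finite intersections.

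However, the paper does not give its own proof of this statement at all. \Los's Theorem is quoted as a background result with a citation to \cite[(2.3.1)]{Sch10}, and the paper moves on immediately. So there is nothing to compare your approach against beyond noting that what you wrote is exactly the proof one finds in standard references. If you want this to fit the paper's style, you could simply omit the proof and retain the citation; if you prefer to include it, your sketch is fine as written.
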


Ring theoretic properties that can be defined equationally can typically be transferred between an ultraring and almost all of its components via \Los' Theorem. For example, almost all $A_w$ are domains if and only if $A_\natural$ is a domain (\cite[2.4.10]{Sch10}). Other critical examples for us are given below.

\begin{exam}[(2.4.1)--(2.4.4), \cite{Sch10}]
A \textit{Lefschetz field} $K$ is a field of characteristic zero that is an ultraproduct almost all of whose components $K_w$ are fields of positive characteristic. The field of complex numbers $\cc$ is isomorphic to a Lefschetz field constructed from the algebraic closures of the finite fields of \charp. More generally, assuming the generalized Continuum Hypothesis, any uncountable algebraically closed field of characteristic zero is  also a Lefschetz field.
\end{exam}

\begin{exam}[(2.9.7), \cite{Sch03}]
Let $\bx_\natural$ be a sequence of elements in an ultraring $A_\natural$ with corresponding sequences of elements $\bx_w$ in each of the $A_w$. Then $\bx_\natural$ is a regular sequence on $A_\natural$ if and only if $\bx_w$ is a regular sequence on $A_w$ for almost all $w$.
\end{exam}

One of the most important constructions used below to transfer results from \charp\ to \echarz\ is the Lefschetz hull of an \echarz\ local ring. The reader is referred to \cite[Sections 4 and 5]{AS07} for the full treatment of these subjects, but we attempt to summarize some key results below. The following is the base assumption for all local rings in the following sections.

\begin{disc}\label{hypoth}
Let $R$ denote an \echarz\ Noetherian local ring that is not necessarily complete. Let $K$ denote a Lefschetz field with respect to an unspecified non-principal ultrafilter placed upon the set of positive prime integers. The components $K_w$ of $K$ will then be algebraically closed fields of characteristic $p(w)$.  The cardinality of $K$ needs to be sufficiently large, e.g., a cardinality exceeding $2^{|R|}$. 

One can add extra data (including a local homomorphism to $K$) so that there exists a \textit{Lefschetz hull} $\mathfrak{D}(R)$ which is defined to be an ultraproduct of rings $R_w$ (called the \textit{approximations of $R$}), where each $R_w$ is a complete local Noetherian ring with algebraically closed, \charp\ residue field $K_w$. The full definition and development of the Lefschetz hull is rather technical, and the curious reader is referred to \cite[Section 4]{AS07} for the complete details. We include several statements and theorems below that will attempt to describe the properties of the Lefschetz hull that are of the greatest interest to us for this present article.

By (\cite[(4.21)]{AS07}), there is a faithfully flat functorial map $R\to \mathfrak{D}(R)$. 
Given any $r\in R$, there is a corresponding element $\ulim_w r_w$ in $\mathfrak{D}(R)$, where each $r_w \in R_w$.
The $r_w$ are called \textit{approximations} of $r$. Also,  $\mathfrak{D}(R)$ is not necessarily Noetherian. 

Given a local homomorphism $R\to S$ of \echarz\ local rings, choose $K$ as above with sufficiently large cardinality compared to both rings. Again by (\cite[(4.26)]{AS07}), 
there exist faithfully flat Lefschetz hulls for both rings forming a commutative diagram
$$
\xymatrix{
\mathfrak{D}(R) \ar[r] & \mathfrak{D}(S) \\
R\ar[u]\ar[r] & S \ar[u]
}
$$

We will consider this setup as our standing hypotheses on local \echarz\ rings and on local homomorphisms between them in the later sections.
\end{disc}

The sense in which the components $R_w$ of the Lefschetz hull $\mathfrak{D}(R)$ are \charp\ approximations of $R$ is indicated by the following summary of results.

\begin{thm} \label{transfer}
Let $R$ be a local domain of \echarz\ with Lefschetz hull $\mathfrak{D}(R)$  and \charp\ approximations $R_w$ as in (\ref{hypoth}). 
\begin{enumerate}
\item \cite[(5.2.1)]{AS07} Almost all $R_w$ have the same dimension as $R$.
\item \cite[(5.4)]{AS07} A tuple $\bx$ from $R$ is a partial system of parameters for $R$ if and only if $\bx_w$ is a partial system of parameters in $R_w$ for almost all $w$. 
\item \cite[(5.17)]{AS07} If $R$ is a domain, then $\mathfrak{D}(R)$ is a domain and $R_w$ is a domain for almost all $w$. 
\end{enumerate}
\end{thm}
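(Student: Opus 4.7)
All three parts are summary statements citing \cite[Sections 4--5]{AS07}; the unified strategy I would follow combines \Los's theorem with the specific properties of the Lefschetz hull construction from \cite[Section 4]{AS07}. In each case, the first-order transfer does most of the work, while the remaining bookkeeping is handled by the faithful flatness of $R\to\mathfrak{D}(R)$.

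For part (3), being a domain is a first-order property in the language of rings ($\forall a,b:\ ab=0\Rightarrow a=0\ \text{or}\ b=0$), so \Los's theorem immediately gives that $\mathfrak{D}(R)=\ulim_w R_w$ is a domain if and only if almost all $R_w$ are domains. The nontrivial half is that $R$ a domain implies $\mathfrak{D}(R)$ a domain; faithful flatness alone does not suffice (witness $\zz\incl\zz\times\zz$), so I would appeal to the detailed construction of the Lefschetz hull, in which the approximations $R_w$ are chosen via an Artin-style approximation so that elementary properties of $R$ transfer upward to $\mathfrak{D}(R)$.

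For part (1), I would combine the faithful flatness of $R\to\mathfrak{D}(R)$ with the characterization of dimension via systems of parameters. Given $x_1,\dots,x_d$ a system of parameters for $R$ with $\mathfrak{m}^n\subseteq(x_1,\dots,x_d)$, the ideal-membership relations expressing that each generator of $\mathfrak{m}^n$ lies in $(x_1,\dots,x_d)$ are first-order, so they transfer to almost all $R_w$ via \Los\ and give $\dim R_w\le d$. The reverse inequality $\dim R_w\ge d$ is obtained dually: a chain of primes in $R$ of length $d$ pulls across the faithfully flat map into $\mathfrak{D}(R)$, and the ultraproduct structure then forces a comparable chain in almost all $R_w$.

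For part (2), I would reduce to part (1). The tuple $\bx$ is a partial system of parameters for $R$ exactly when $\dim R/(\bx)=\dim R-\#\bx$; since the Lefschetz hull behaves well with respect to quotients, applying (1) to $R/(\bx)$ yields that $\bx_w$ is a partial system of parameters in $R_w$ for almost all $w$, and running the argument in reverse handles the converse. The main obstacle throughout is that $\mathfrak{D}(R)$ is not Noetherian, so standard Krull-dimension arguments cannot be applied directly and must instead be routed through the faithful flatness of the hull and the first-order transferable content of each statement.
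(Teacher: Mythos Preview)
The paper does not prove this theorem: it is stated purely as a summary of results imported from \cite{AS07}, with each part carrying an explicit citation and no argument given. So there is no ``paper's own proof'' to compare against; the theorem functions as a black box.

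Your sketches go further than the paper does, attempting to outline what the proofs in \cite{AS07} might look like. The identification of the main tools---\Los's theorem for first-order transfer, faithful flatness of $R\to\mathfrak{D}(R)$, and the specifics of the hull construction---is correct, and your argument for the upper bound $\dim R_w\le d$ in part~(1) is sound, since the finitely many ideal-membership relations expressing $\mathfrak{m}^n\subseteq(x_1,\dots,x_d)$ do transfer. Your counterexample $\zz\hookrightarrow\zz\times\zz$ in part~(3), showing that faithful flatness alone cannot force the extension to be a domain, is also apt.

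The one place where your outline would not go through as written is the reverse inequality $\dim R_w\ge d$ in part~(1). Lifting a chain of primes from $R$ into the non-Noetherian ring $\mathfrak{D}(R)$ and then pushing it down to almost all $R_w$ is not a first-order statement: being a prime ideal and the strictness of inclusions in a chain are not expressible by finitely many polynomial equations and inequations, so \Los\ does not apply directly. The actual argument in \cite{AS07} instead uses the explicit construction of the approximations $R_w$ as quotients of complete regular local rings of controlled dimension, from which the dimension equality follows structurally rather than by abstract transfer. Your reduction in part~(2) is fine in principle but inherits this same gap through its dependence on part~(1).
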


We close out this introduction with a very quick review of big Cohen-Macaulay algebras.

\begin{defn}
Let $(R,\mathfrak{m})$ be a local ring of dimension $d$. An $R$-algebra $B$ is a big \CM\ algebra over $R$ with respect to the system of parameters $x_1,\ldots,x_d$ for $R$ if $x_1,\ldots,x_d$ forms a regular sequence on $B$  and $\mathfrak{m}B \ne B$. We also call $B$ a big \CM\ algebra over $R$ if the particular \sop\ is not relevant.

An $R$-algebra $B$ is a balanced big \CM\ algebra over $R$ if every \sop\ on $R$ is a \regseq\ on $B$ and $\mathfrak{m}B \ne B$.
\end{defn}

As mentioned at the start of the paper, Hochster and Huneke \cite{HH92} first proved the existence of balanced big \CM\ algebras over excellent local rings $R$ of \charp\ by showing that every \sop\ from $R$ is a regular sequence on $R^+$. Hochster generalized this in \cite{Ho94} by showing how to construct a balanced big \CM\ algebra over $R$ using systems of algebra modifications. If $S$ is an $R$-algebra, and $sx_{k+1} = \sum_{i=1}^k x_is_i$ is a relation in $S$ on a partial \sop\ from $R$, then an algebra modification of $S$ is 
$$
T = \frac{S[U_1\ldots,U_k]}{s-\sum_{i=1}^k x_i U_i}
$$
For \charp\ local rings $R$, a direct limit of algebra modifications can be built into a balanced big \CM\ algebra over $R$.

Before we proceed further, we note that the \CM\ property is preserved by ultraproducts. The result below is a generalization of \cite[Theorem 6.4.7]{Sch10}, which proves something similar for the specific case of absolute integral closures $R_w^+$ and their ultraproducts.

\begin{lemma} \label{ultrabigCM}
Let $R$ be a local domain of \echarz\ with Lefschetz hull $\mathfrak{D}(R)$  and \charp\ approximations $R_w$ as in Discussion \ref{hypoth}. Suppose that $B_\natural  = \textnormal{ulim}_w\ B_w$, where $B_w$ is an $R_w$-algebra. Then 
\begin{alist}
\item $B_\natural$ is a big Cohen-Macaulay $R$-algebra with respect to a fixed \sop\ $\bx$ if and only if $B_w$ is a big Cohen-Macaulay $R_w$-algebra with respect to any choice of 
approximations $\bx_w$ for almost all $w$. 
\item $B_\natural$ is a balanced big Cohen-Macaulay $R$-algebra if and only if for almost all $w$, $B_w$ is a big Cohen-Macaulay algebra over $R_w$ with respect to the approximations $\bx_w$ for all \sops\ $\bx$ of $R$. 
\item If almost all $B_w$ are balanced big Cohen-Macaulay $R_w$-algebras, then $B_\natural$ is a balanced big Cohen-Macaulay $R$-algebra.
\end{alist}
\end{lemma}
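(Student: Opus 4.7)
The plan is to reduce parts (b) and (c) to part (a), since (a) is the heart of the lemma. Part (a) requires transferring two conditions between $B_\natural$ and almost all $B_w$: (i) that $\bx$ is a regular sequence on $B_\natural$, and (ii) that $\mathfrak{m}B_\natural \ne B_\natural$. Condition (i) would be immediate from Example~(2.9.7), since the image of $\bx$ in $B_\natural$ under the structure map $R\to\mathfrak{D}(R)\to B_\natural$ is exactly $\ulim_w \bx_w$.

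The substantive step is (ii), and this is the main obstacle, since $\mathfrak{m}$ is not defined by a single ring equation. I would first reduce the condition ``$\mathfrak{m}B\ne B$'' to the finitely-generated condition ``$(\bx)B\ne B$'' as follows. Since $\bx$ is a \sop\ of $R$, one has $\mathfrak{m}^N\subseteq(\bx)$ for some $N$, hence $\mathfrak{m}^N B\subseteq(\bx)B$ for any $R$-algebra $B$; then $\mathfrak{m}B=B$ forces $B=\mathfrak{m}^N B\subseteq(\bx)B$, giving $(\bx)B=B$, and the reverse implication is immediate from $(\bx)\subseteq\mathfrak{m}$. Applying this both to $(R,B_\natural)$ and to $(R_w,B_w)$ (the latter justified because $\bx_w$ is a \sop\ of $R_w$ for almost all $w$ by Theorem~\ref{transfer}(2)), condition (ii) becomes the solvability of the single linear equation $1=\sum_i x_i b_i$, which transfers between $B_\natural$ and almost all $B_w$ by \Los's Theorem. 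Intersecting the almost-all sets from (i), (ii), and Theorem~\ref{transfer}(2) then establishes (a).

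For (b), I would apply (a) separately to each \sop\ $\bx$ of $R$: the forward direction produces, for each $\bx$, an almost-all set on which $B_w$ is big \CM\ with respect to $\bx_w$, while the reverse direction reassembles these to give big \CM\ with respect to every \sop\ of $R$ on $B_\natural$, i.e., the balanced property. For (c), given any \sop\ $\bx$ of $R$, Theorem~\ref{transfer}(2) supplies an almost-all set on which $\bx_w$ is a \sop\ of $R_w$; intersecting with the hypothesized almost-all set on which $B_w$ is balanced big \CM\ over $R_w$ yields an almost-all set on which $B_w$ is big \CM\ with respect to $\bx_w$, and then (a) gives the big \CM\ property for $B_\natural$ with respect to $\bx$. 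Since $\bx$ is arbitrary, $B_\natural$ is balanced big \CM.
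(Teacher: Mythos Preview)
Your proof is correct and follows essentially the same approach as the paper: both reduce (b) and (c) to (a), and both establish (a) by combining Theorem~\ref{transfer}(2) (transfer of the \sop\ property) with \Los's Theorem (transfer of the regular sequence property). Your treatment is in fact more careful than the paper's own proof, which never explicitly addresses the condition $\mathfrak{m}B_\natural\ne B_\natural$; your reduction of this to the equational condition $(\bx)B_\natural\ne B_\natural$ via $\mathfrak{m}^N\subseteq(\bx)$ makes the application of \Los\ transparent, whereas the paper leaves this step implicit in its citation of \Los\ and \cite[Lemma~5.4]{AS07}.
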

\begin{proof}
Let $\bx$ be a sequence of elements from $R$ with corresponding approximations $\bx_w$ in each of the $R_w$. By \cite[Lemma 5.4]{AS07}, $\bx$ is a system of parameters for $R$ if and only if $\bx_w$ is a system of parameters for almost all of the $R_w$. By \Los' Theorem (or see the details of the proof for \cite[Lemma 5.4]{AS07}), $\bx$ will be a regular sequence on $B_\natural$ if and only if $\bx_w$ is regular on $B_w$ for almost all $w$.  Thus, $B_\natural$ is big Cohen-Macaulay over $R$ with respect to $\bx$ if and only if for almost all $w$, $B_w$ is big Cohen-Macaulay over $R_w$ with respect to $\bx_w$. Part (b) follows from part (a). The single direction for part (c) then follows from part (b) as any approximation of a \sop\ from $R$ will be a \sop\ for almost all $R_w$.
\end{proof}

The converse of part (c) is likely to be false, but we do not have an explicit counterexample. There is no \textit{a priori} reason why a big Cohen-Macaulay $R_w$-algebra with respect to approximations $\bx_w$ of the \sops\ of $R$ should still be big Cohen-Macaulay over $R_w$ with respect to \textit{all} \sops\ of $R_w$, i.e., not every \sop\ for $R_w$ will necessarily arise as an approximation of a \sop\ from $R$.

\section{Seeds in Equal Characteristic Zero}

\begin{defn}
Given a local ring $(R,\mathfrak{m})$, a seed $S$ over $R$ is an $R$-algebra $S$ such that there exists an $R$-algebra map $S\to B$ where $B$ is a (balanced) big Cohen-Macaulay $R$-algebra \cite{D07}. 
\end{defn}

\begin{rem}
By the result of Bartijn and Strooker \cite{BS83}, there is no harm in leaving off the adjective ``balanced" as any big Cohen-Macaulay algebra (or module) can be modified into a balanced version by applying the $\mathfrak{m}$-adic separated completion. 
\end{rem}

In order to apply a reduction to characteristic $p$ method via ultraproducts, we find it necessary to set forth the following modified definition for rings of \echarz.

\begin{defn}
Let $R$ be a local domain of \echarz\ with Lefschetz hull $\mathfrak{D}(R)$  and \charp\ approximations $R_w$ as in Discussion \ref{hypoth}.
An $R$-algebra $S$ is called a \textit{rational seed} if there exists an $R$-algebra map $S\to T_\natural$ such that $T_\natural = \textnormal{ulim}_w\ T_w$, where $T_w$ is a seed algebra over $R_w$ for almost all $w$. We will call the ultrarings $T_\natural$ \textit{ultraseeds}.
\end{defn}

Rational seeds exist in equal characteristic zero.

\begin{thm}
Let $R$ be a local domain of \echarz\ with Lefschetz hull $\mathfrak{D}(R)$  and \charp\ approximations $R_w$ as in Discussion \ref{hypoth}. Then $R$ is a rational seed and $\mathfrak{D}(R)$ is an ultraseed over $R$.
\end{thm}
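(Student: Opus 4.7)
The plan is short because the definitions and the Lefschetz hull setup do most of the work. The strategy is to exhibit $\mathfrak{D}(R)$ itself as the target ultraseed, using that each approximation $R_w$ is trivially a seed over itself in prime characteristic.

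First I would observe that by Theorem~\ref{transfer}(3), since $R$ is a domain, almost all of the approximations $R_w$ are also domains. By construction each $R_w$ is a complete local Noetherian ring with algebraically closed, characteristic $p(w)>0$ residue field, so for almost all $w$ the ring $R_w$ is an excellent (in fact complete) local domain of prime characteristic. The Hochster--Huneke result recalled in the introduction then applies: $R_w^+$ is a balanced big Cohen--Macaulay $R_w$-algebra, and the structural map $R_w\to R_w^+$ witnesses $R_w$ as a seed over itself in the sense of \cite{D07}.

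Next I would assemble these into an ultraseed. Since each $R_w$ (for almost all $w$) is a seed over $R_w$, the ultraring $\mathrm{ulim}_w R_w = \mathfrak{D}(R)$ fits the definition of an ultraseed with $T_w = R_w$ and $T_\natural = \mathfrak{D}(R)$. Thus $\mathfrak{D}(R)$ is an ultraseed over $R$.

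Finally, for the rational seed claim, I would invoke the faithfully flat $R$-algebra map $R\to \mathfrak{D}(R)$ provided by the Lefschetz hull construction (\cite[(4.21)]{AS07}) as recalled in Discussion~\ref{hypoth}. This is an $R$-algebra map from $R$ into the ultraseed $\mathfrak{D}(R)$, so by definition $R$ is a rational seed over itself. No real obstacle arises; the only point worth checking carefully is that the ``almost all'' conditions from Theorem~\ref{transfer}(3) and the hypotheses on $R_w$ align, so that the Hochster--Huneke existence theorem can be applied component-wise on a set in the ultrafilter.
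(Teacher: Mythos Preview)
Your proposal is correct and follows essentially the same approach as the paper: you use that almost all $R_w$ are complete local domains of prime characteristic (via Theorem~\ref{transfer}(3)), invoke Hochster--Huneke to conclude each such $R_w$ is a seed over itself, deduce that $\mathfrak{D}(R)=\ulim_w R_w$ is an ultraseed, and then use the structure map $R\to\mathfrak{D}(R)$ to conclude $R$ is a rational seed. The paper's proof is the same in outline; the only cosmetic differences are that it cites \cite[(5.15)]{HH92} or \cite[(6.11)]{D07} rather than $R_w^+$ specifically, and it explicitly sets $B_w=0$ for the exceptional $w$ where $R_w$ fails to be a complete local domain.
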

\begin{proof}
Consider a Lefschetz hull $\mathfrak{D}(R)$ and the approximations $R_w$ in characteristic $p$. By \cite[(4.2) and (5.17)]{AS07} almost all of the $R_w$ are complete, local domains of characteristic $p$ and so can be mapped to balanced big Cohen-Macaulay $R_w$-algebras $B_w$ by \cite[(5.15)]{HH92} or \cite[(6.11)]{D07}. (For the cases where $R_w$ is not a complete, local domain, set $B_w$ to be the zero ring.) As almost all of the $R_w$ rings are seeds over themselves, $\mathfrak{D}(R)$ is an ultraseed over $R$. The map $R\to \mathfrak{D}(R)$ shows that $R$ is a rational seed.
\end{proof}

The following lemma provides alternative characterizations of rational seeds. In short, the lemma says that when working with a rational seed $S$, we can skip over the ultraseed $T_\natural$ and work directly with an ultraring $B_\natural$ that is also a (balanced) big Cohen-Macaulay $R$-algebra.

\begin{lemma} \label{ultraseed}
Let $R$ be a local domain of \echarz\ with Lefschetz hull $\mathfrak{D}(R)$  and \charp\ approximations $R_w$ as in Discussion \ref{hypoth}.  Then the following are equivalent for an $R$-algebra $S$:
\begin{nlist}
\item $S$ is a rational seed over $R$.
\item There exists an $R$-algebra map $S\to B_\natural$ such that $B_\natural$ is an ultraring and a balanced big Cohen-Macaulay $R$-algebra.
\item There exists an $R$-algebra map $S\to B_\natural$ such that $B_\natural$ is an ultraring and a big Cohen-Macaulay $R$-algebra with respect to a fixed \sop\ from $R$.
\end{nlist}
\end{lemma}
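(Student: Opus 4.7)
The plan is to prove the circle of implications $(2) \Rightarrow (3) \Rightarrow (1) \Rightarrow (2)$. Implication $(2) \Rightarrow (3)$ is immediate, since a balanced big Cohen-Macaulay $R$-algebra is automatically big Cohen-Macaulay with respect to any fixed \sop\ of $R$. The remaining two implications are where all the work sits, and both are driven by Lemma \ref{ultrabigCM} together with the observation that any big \CM\ algebra in \charp\ is automatically a seed (it maps to its own $\mathfrak{m}$-adic separated completion, which is balanced big \CM\ by Bartijn--Strooker).

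For $(3) \Rightarrow (1)$, suppose we have an $R$-algebra map $S \to B_\natural$ where $B_\natural = \ulim_w B_w$ and $B_\natural$ is big Cohen-Macaulay over $R$ with respect to some fixed \sop\ $\bx$. By Lemma \ref{ultrabigCM}(a), for almost all $w$ the component $B_w$ is a big \CM\ $R_w$-algebra with respect to the approximations $\bx_w$. Since almost all $R_w$ are complete local domains of \charp\ (by Theorem~\ref{transfer}), each such $B_w$ is in turn a seed over $R_w$ (by the prime characteristic theory in \cite{D07}, or simply by completing $\mathfrak{m}_w$-adically to obtain a balanced big \CM\ $R_w$-algebra). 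Thus $B_\natural$ is an ultraseed and the map $S \to B_\natural$ witnesses that $S$ is a rational seed.

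For $(1) \Rightarrow (2)$, start with an $R$-algebra map $S \to T_\natural$ where $T_\natural = \ulim_w T_w$ and $T_w$ is a seed over $R_w$ for almost all $w$. For each such $w$, choose a map $T_w \to B_w$ to a balanced big \CM\ $R_w$-algebra (possible by definition of seed in \charp); for the remaining $w$ set $B_w = T_w$ or any convenient $R_w$-algebra, since those indices lie outside our filter. Taking the ultraproduct produces a map $T_\natural \to B_\natural$ where $B_\natural = \ulim_w B_w$. By Lemma \ref{ultrabigCM}(c), $B_\natural$ is a balanced big Cohen-Macaulay $R$-algebra, and the composite $S \to T_\natural \to B_\natural$ is the desired map.

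The main potential obstacle is purely bookkeeping: one must be careful that the ``almost all'' hypotheses line up compatibly when forming ultraproducts of maps defined on only some components, and that the choices of $B_w$ on the bad indices do not interfere with applying Lemma \ref{ultrabigCM}(c). This is routine because the filter ignores exceptional indices and the needed regular-sequence and nontriviality conditions are already packaged into Lemma \ref{ultrabigCM}. No additional appeal to \Los' Theorem beyond what is already embedded in that lemma should be necessary.
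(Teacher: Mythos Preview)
Your proof is correct and follows essentially the same approach as the paper's own argument: both establish the cycle of implications using Lemma~\ref{ultrabigCM}(a) for $(3)\Rightarrow(1)$ and Lemma~\ref{ultrabigCM}(c) for $(1)\Rightarrow(2)$, with $(2)\Rightarrow(3)$ trivial. Your treatment is slightly more explicit about the bad indices and about why a big \CM\ $R_w$-algebra is a seed, but the substance is identical.
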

\begin{proof}
For (1) $\Rightarrow$ (2), there is a map $S\to T_\natural$, where almost all approximations $T_w$ are seeds over $R_w$. Thus, almost all of the $T_w$ can be mapped to balanced big Cohen-Macaulay $R_w$-algebras $B_w$. Let $B_\natural$ be the ultraproduct of the $B_w$. Then the map $S\to T_\natural$ and maps $T_w\to B_w$ induce a composite map $S\to B_\natural$. Moreover, Lemma~\ref{ultrabigCM}(c) implies that $B_\natural$ is a balanced big Cohen-Macaulay $R$-algebra. The implication (2) $\Rightarrow$ (3) is obvious.
Finally, for (3) $\Rightarrow$ (1) note that Lemma~\ref{ultrabigCM}(a) implies that almost all of the approximations $B_w$ are big Cohen-Macaulay $R_w$-algebras for some fixed \sop\ from $R_w$ and thus are seeds over $R_w$.
\end{proof}

The next result indicates that $R$-algebras that are formed by algebra modifications of a rational seed 
are also rational seeds.

\begin{prop}
Let $R$ be a local domain of \echarz\ with Lefschetz hull $\mathfrak{D}(R)$  and \charp\ approximations $R_w$ as in Discussion \ref{hypoth}. Let $S$ be a rational seed over $R$. Suppose that $T$ is an $S$-algebra that is also a direct limit of algebra modifications of $S$ with respect to relations on the \sops\ of $R$. Then $T$ is a rational seed.
\end{prop}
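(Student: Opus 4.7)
The plan is to use Lemma~\ref{ultraseed} to replace the defining ultraseed target for $S$ by a balanced big Cohen-Macaulay ultraring $B_\natural$, and then to lift the structure map $S\to B_\natural$ across the tower of algebra modifications that builds $T$. The key observation is that in a balanced big Cohen-Macaulay $R$-algebra every relation of the form being modified is already trivial, so each modification admits an extension of the structure map.

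Concretely, I would begin by invoking Lemma~\ref{ultraseed}(2) to get an $R$-algebra map $\varphi\colon S\to B_\natural$, where $B_\natural=\ulim_w B_w$ is an ultraring and simultaneously a balanced big Cohen-Macaulay $R$-algebra. It then suffices to produce an extension $\psi\colon T\to B_\natural$ of $\varphi$ as $R$-algebras: once this is achieved, Lemma~\ref{ultraseed} applied in reverse witnesses that $T$ is a rational seed.

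Next I would treat a single modification. Given a relation $sx_{k+1}=\sum_{i=1}^{k}x_is_i$ in $S$ on a partial system of parameters $x_1,\dots,x_{k+1}$ from $R$, the corresponding modification is $S'=S[U_1,\dots,U_k]/(s-\sum_i x_iU_i)$. Pushing the relation through $\varphi$ gives $\varphi(s)\,x_{k+1}=\sum_i x_i\,\varphi(s_i)$ in $B_\natural$, and since $B_\natural$ is balanced big Cohen-Macaulay over $R$ the elements $x_1,\dots,x_{k+1}$ act on $B_\natural$ as a regular sequence. Hence $\varphi(s)\in(x_1,\dots,x_k)B_\natural$, say $\varphi(s)=\sum_i x_ib_i$ with $b_i\in B_\natural$, and $U_i\mapsto b_i$ extends $\varphi$ to an $R$-algebra map $S'\to B_\natural$. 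I would then extend to the full directed system by transfinite induction on the stages of modification (or equivalently by a Zorn's Lemma argument on compatible partial extensions $T'\to B_\natural$ for intermediate modifications $T'$ lying between $S$ and $T$): successor stages use the one-step lifting just described, and limit stages use the universal property of the direct limit. Once compatible maps from every intermediate modification into $B_\natural$ are assembled, the universal property of $T$ as a direct limit of modifications delivers the desired $\psi\colon T\to B_\natural$.

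The main subtlety I expect is bookkeeping across the directed system: the choices of the $b_i$ at each successor stage are not canonical, so one cannot hope to construct all the partial extensions $T'\to B_\natural$ simultaneously by a single functorial recipe. The transfinite/Zorn framework side-steps this by only demanding compatibility along the transition maps of the modification system, and this compatibility is built into the induction. Note that no further reduction to characteristic $p$ is needed here: the entire argument takes place on the ultraring side, exploiting only that $B_\natural$ is itself already balanced big Cohen-Macaulay over $R$.
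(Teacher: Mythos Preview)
Your proof is correct and follows essentially the same approach as the paper: obtain $S\to B_\natural$ with $B_\natural$ a balanced big Cohen-Macaulay ultraring via Lemma~\ref{ultraseed}, then extend this map to $T$. The only difference is that the paper cites \cite{RG15} for the fact that a direct limit of algebra modifications of $S$ maps to every balanced big Cohen-Macaulay $S$-algebra, whereas you prove this directly via the one-step lifting and transfinite induction; your argument is exactly what underlies the cited result.
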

\begin{proof}
As $T$ is a direct limit of algebra modifications of $S$, it maps to every $S$-algebra that is also a balanced big \CM\ $R$-algebra \cite{RG15}. Since $S$ is a rational seed, Lemma~\ref{ultraseed} implies that there is a map $S\to B_\natural$, a balanced big \CM\ $R$-algebra and an ultraring. Thus, there is a map $T\to B_\natural$, showing that $T$ is a rational seed.\end{proof}

\begin{ques}
Are there $R$-algebra seeds $S$ in equal characteristic zero that are not rational seeds? 

This question reduces to the question of whether there are big Cohen-Macaulay $R$-algebras in equal characteristic zero that do not arise from some reduction to characteristic $p$ argument or from a sequence of algebra modifications but exist ``intrinsically" over $R$. Answering this question one way or another would inform us about whether or not there is a difference between rational seeds and ordinary seeds. For the time being however, the notion of rational seed seems robust enough.
\end{ques}

We can also show that direct limits of ultraseeds are ultraseeds under certain conditions, paralleling \cite[Lemma 3.2]{D07}. To do so we need a lemma about direct limits and ultralimits. 
Unlike the analogous work in \cite{D07}, Lemma~\ref{directultralimit} and Proposition~\ref{limultraseed} are not used in  deriving the remaining results in this article. 

\begin{lemma} \label{directultralimit}
Let $\Lambda$ be a directed set. Let $\mathcal{W}$ be an ultrafilter on an infinite set $W$. Suppose that \emph{for all} $w\in W$ and all $\lambda\leq \mu \leq \nu \in \Lambda$ we have ring maps 
$$
\begin{array}{c}
\ds f_w^{(\lambda,\mu)}: A_w^{(\lambda)} \to A_w^{(\mu)}\\[2mm]
\ds f_w^{(\mu,\nu)}: A_w^{(\mu)} \to A_w^{(\nu)}\\[2mm]
f_w^{(\lambda,\nu)}: A_w^{(\lambda)} \to A_w^{(\nu)}
\end{array}
$$
such that  $f_w^{(\lambda,\nu)} = f_w^{(\mu,\nu)}\circ f_w^{(\lambda,\mu)}$. For all $\lambda \in \Lambda$ let $A_\natural^{(\lambda)} = \ulim\ A_w^{(\lambda)}$, for all $w\in W$ let $A_w = \dirlim_\lambda A_w^{(\lambda)}$, and let $A_\natural = \ulim\ A_w$. Then 
$$
A_\natural = \dirlim_\lambda A_\natural^{(\lambda)}
$$
in the category of ultrarings with respect to $\mathcal{W}$, with morphisms maps of ultrarings.
\end{lemma}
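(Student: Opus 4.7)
My plan is to verify that $A_\natural$, together with structure maps $\pi_\natural^{(\lambda)} := \ulim_w \pi_w^{(\lambda)}$ (where $\pi_w^{(\lambda)} : A_w^{(\lambda)} \to A_w$ is the ordinary direct-limit structure map at the index $w$), satisfies the universal property of a colimit of the diagram $\{A_\natural^{(\lambda)}, f_\natural^{(\lambda,\mu)}\}$ in the category of ultrarings. The cocone relations $\pi_\natural^{(\mu)} \circ f_\natural^{(\lambda,\mu)} = \pi_\natural^{(\lambda)}$ are immediate, since the corresponding identities $\pi_w^{(\mu)} \circ f_w^{(\lambda,\mu)} = \pi_w^{(\lambda)}$ hold at \emph{every} $w$ by construction of each componentwise direct limit, and these pass through the ultraproduct by \Los's theorem.

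For uniqueness in the universal property, suppose two ultraring maps $h_\natural, h'_\natural : A_\natural \to C_\natural$ both factor a common cocone $\{h_\natural^{(\lambda)}\}_\lambda$. Writing them componentwise as $h_w$ and $h'_w$, the hypothesis forces $h_w \circ \pi_w^{(\lambda)} = h'_w \circ \pi_w^{(\lambda)}$ for $w$ in some $V_\lambda \in \mathcal{W}$ at each $\lambda$. Since every element of $A_w$ lies in $\im \pi_w^{(\lambda)}$ for some $\lambda$, the ordinary direct-limit uniqueness at the component level yields $h_w = h'_w$ almost everywhere, whence $h_\natural = h'_\natural$ as ultraring maps.

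The core of the proof is existence. Given a compatible cocone $\{h_\natural^{(\lambda)}\}_\lambda$ of ultraring maps into $C_\natural = \ulim_w C_w$, I select componentwise representatives $h_w^{(\lambda)} : A_w^{(\lambda)} \to C_w$ and construct $h_\natural$ element-wise: for $a = \ulim_w a_w \in A_\natural$, choose for each $w$ an index $\lambda(w) \in \Lambda$ and a preimage $\tilde a_w \in A_w^{(\lambda(w))}$ with $\pi_w^{(\lambda(w))}(\tilde a_w) = a_w$, and set $h_\natural(a) := \ulim_w h_w^{(\lambda(w))}(\tilde a_w)$. Each verification (independence of the choices, additivity, multiplicativity) involves only finitely many indices from $\Lambda$ at a time, and by \Los's theorem reduces to a finite compatibility among the $h_w^{(\lambda)}$'s, which holds on the finite intersection of the relevant sets $W_{\lambda,\mu} := \{w : h_w^{(\mu)} \circ f_w^{(\lambda,\mu)} = h_w^{(\lambda)}\} \in \mathcal{W}$. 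The main technical obstacle is that these $W_{\lambda,\mu}$ depend on the pair $(\lambda,\mu)$ and a simultaneous intersection over all pairs need not lie in $\mathcal{W}$; the remedy is to work only with finitely many pairs at each stage and then to realize $h_\natural$ as the ultraproduct of componentwise maps $h_w : A_w \to C_w$ obtained after adjusting the $h_w^{(\lambda)}$ on null sets to enforce honest componentwise compatibility, which leaves each $h_\natural^{(\lambda)}$ unchanged as an ultraring map.
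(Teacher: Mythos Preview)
Your argument diverges from the paper's in a way that creates a genuine gap rather than an alternative route.

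The paper's proof is short: since a \emph{map of ultrarings} is by definition the datum of a family $(\psi_w)_{w\in W}$ of componentwise maps (one for \emph{every} $w$, not merely almost every $w$), the cocone condition $\psi^{(\lambda)} = \psi^{(\mu)}\circ f^{(\lambda,\mu)}$ in this category means $\psi_w^{(\lambda)} = \psi_w^{(\mu)}\circ f_w^{(\lambda,\mu)}$ for \emph{all} $w\in W$ and all $\lambda\le\mu$. One then applies the ordinary universal property of $A_w=\dirlim_\lambda A_w^{(\lambda)}$ at each fixed $w$ to obtain $\psi_w:A_w\to C_w$, and sets $\psi=\ulim_w\psi_w$. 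No ``almost all'' bookkeeping is needed.

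You instead treat the cocone compatibilities as holding only on sets $W_{\lambda,\mu}\in\mathcal{W}$ depending on the pair, and then try to repair this. The repair is where the gap lies. Your claim that one can ``adjust the $h_w^{(\lambda)}$ on null sets to enforce honest componentwise compatibility'' would require modifying the representatives on the union $\bigcup_{\lambda\le\mu}(W\setminus W_{\lambda,\mu})$; for infinite $\Lambda$ this union need not lie outside $\mathcal{W}$, so there is no reason the adjusted families remain representatives of the original $h_\natural^{(\lambda)}$, nor that a compatible choice exists at all. The same infinite-intersection problem undermines your uniqueness argument: from $h_w\circ\pi_w^{(\lambda)}=h'_w\circ\pi_w^{(\lambda)}$ for $w\in V_\lambda$ you need $w\in\bigcap_\lambda V_\lambda$ to conclude $h_w=h'_w$, and that intersection need not be in $\mathcal{W}$.

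The fix is not to patch the null sets but to adopt the paper's reading of the category: morphisms are families indexed by \emph{all} of $W$, equality of morphisms is equality of families, and the lemma's hypothesis that the transition maps $f_w^{(\lambda,\mu)}$ exist for all $w$ (emphasized in the statement) is precisely what makes the componentwise argument go through without any ultrafilter subtlety.
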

\begin{proof}
First note that as the maps $f_w^{(\lambda,\mu)}$, etc. exist for all $w\in W$ (and not just almost all $w$), the hypotheses on compositions of those maps induce analogous properties on maps $f^{(\lambda,\mu)}$ in the directed system of ultrarings $\{A_\natural^{(\lambda)}\}_{\lambda \in \Lambda}$ by \cite[(2.1.7)]{Sch10}. Therefore, $\dirlim_\lambda A_\natural^{(\lambda)}$ exists in the category of ultrarings. 

We will now prove that $A_\natural = \dirlim_\lambda A_\natural^{(\lambda)}$ by showing $A_\natural$ satisfies the universal mapping property of a direct limit in the category of ultrarings. Indeed, suppose that $C_\natural = \ulim\ C_w$ is an ultraring and that for all $\lambda\in \Lambda$ there exists a map of ultrarings $\psi^{(\lambda)}: A_\natural^{(\lambda)}\to C_\natural$ such that $\psi^{(\lambda)} = \psi^{(\mu)} \circ f^{(\lambda,\mu)}$ for all $\lambda\leq \mu$ in $\Lambda$. Therefore, for all $w\in W$ we have maps
$$
\psi_w^{(\lambda)}: A_w^{(\lambda)}\to C_w \mbox{ such that } \psi_w^{(\lambda)} = \psi_w^{(\mu)} \circ f_w^{(\lambda,\mu)}
$$
for all $\lambda \leq \mu$ in $\Lambda$. For each $w\in W$ we apply the universal mapping property of direct limits to construct a map $\psi_w: A_w \to C_w$ as $A_w =  \dirlim_\lambda A_w^{(\lambda)}$. As we have maps for all $w\in W$, \cite[(2.1.7)]{Sch10} implies that we have a map $\psi: A_\natural \to C_\natural$, which shows that $A_\natural$ is  $\dirlim_\lambda A_\natural^{(\lambda)}$ in the category of ultrarings. 
\end{proof}

\begin{prop} \label{limultraseed}
Let $R$ be a local domain of \echarz\ with Lefschetz hull $\mathfrak{D}(R)$  and \charp\ approximations $R_w$ as in Discussion \ref{hypoth}. Adopt the same notation and hypotheses of Lemma~\ref{directultralimit} for $R_w$-algebras $A_w^{(\lambda)}$ and $A_w$ and ultrarings $A_\natural^{(\lambda)}$ and $A_\natural = \ulim\ A_w = \dirlim_\lambda A_\natural^{(\lambda)}$.
Then $A_\natural$ is an ultraseed (i.e., $A_w$ is a seed over $R_w$ for almost all $w$) if and only if $P = \{w\, |\, A_w^{(\lambda)} \textnormal{ is a seed over } R_w \textnormal{ for all } \lambda \}$ is a set in the ultrafilter $\mathcal{W}$ on $W$.
\end{prop}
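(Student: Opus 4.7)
The plan is to handle the two directions of the biconditional separately. The forward direction is essentially formal, using only that any $R_w$-algebra mapping to a seed is itself a seed, while the reverse direction reduces cleanly to a fiberwise application of the characteristic-$p$ statement \cite[Lemma 3.2]{D07}.

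For the forward implication, I would unpack as follows: if $A_\natural$ is an ultraseed then $Q = \{w \mid A_w \textnormal{ is a seed over } R_w\}$ lies in $\mathcal{W}$. For each $w \in Q$, fix an $R_w$-algebra map $A_w \to B_w$ where $B_w$ is a balanced big Cohen-Macaulay $R_w$-algebra. Composing with the structural map $A_w^{(\lambda)} \to A_w$ of the direct limit exhibits every $A_w^{(\lambda)}$ as a seed over $R_w$. Hence $Q \subseteq P$, and closure of $\mathcal{W}$ under supersets gives $P \in \mathcal{W}$.

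For the reverse implication, I would fix an arbitrary $w \in P$ and apply \cite[Lemma 3.2]{D07} to the directed system $\{A_w^{(\lambda)}\}_{\lambda \in \Lambda}$ of seeds over the characteristic-$p$ local ring $R_w$, equipped with the transition maps $f_w^{(\lambda,\mu)}$. That lemma yields that $A_w = \dirlim_\lambda A_w^{(\lambda)}$ is itself a seed over $R_w$. Since this holds for every $w \in P$, we obtain $P \subseteq \{w \mid A_w \textnormal{ is a seed over } R_w\}$, so the latter set lies in $\mathcal{W}$ and $A_\natural$ is an ultraseed by definition.

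The one point that needs checking is that the hypotheses of \cite[Lemma 3.2]{D07} genuinely apply to the fiberwise systems $\{A_w^{(\lambda)}\}_\lambda$; since the transition maps are $R_w$-algebra maps by assumption and $\Lambda$ is directed, this should be automatic. Notably, no ultraproduct-specific machinery beyond the definition of ultraseed and the filter axioms for $\mathcal{W}$ is needed, which is consistent with the remark preceding the proposition that Lemma~\ref{directultralimit} is not required in the remaining sections.
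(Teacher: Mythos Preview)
Your proof is correct and follows essentially the same approach as the paper: the forward direction uses the structural maps $A_w^{(\lambda)} \to A_w$ to show $Q \subseteq P$, and the reverse direction applies \cite[Lemma 3.2]{D07} fiberwise to conclude each $A_w$ is a seed. The only cosmetic difference is that the paper briefly invokes Lemma~\ref{directultralimit} to justify the identification $A_\natural = \ulim_w A_w$ in the forward direction, whereas you (reasonably) take this identification as already given by the hypotheses of the proposition.
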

\begin{proof}
For the forward direction, note that if $A_\natural$ is an ultraseed, then the maps $A_\natural^{(\lambda)}\to A_\natural$ in the direct limit system arise from maps $A_w^{(\lambda)}\to A_w$ for all $w$, using the previous lemma to identify $A_\natural$ with $\ulim\ A_w$. As the $A_w$ are seeds over $R_w$ for almost all $w$, for these values of $w$, the maps $A_w^{(\lambda)}\to A_w$ imply that $A_w^{(\lambda)}$ is a seed over $R_w$ for all $\lambda$, showing that the set $P$ is in the ultrafilter $\mathcal{W}$. 

Conversely, suppose that the set $P$ is in the ultrafilter $\mathcal{W}$. Then for almost all $w$ we have that $A_w^{(\lambda)}$ is a seed over $R_w$ for all $\lambda$. Hence for almost all $w$ we have $A_w = \dirlim_\lambda A_w^{(\lambda)}$ is a seed over $R_w$ \cite[Lemma 3.2]{D07}, which implies that $A_\natural = \ulim\ A_w$ is an ultraseed. 
\end{proof}

\begin{remk}
The proposition above, unlike most other results in this work, depends upon the choice of ultrafilter $\mathcal{W}$. The subtlety lies in the fact that the set $P$ in the statement is equal to the \textit{possibly infinite} intersection of the sets $P_\lambda = \{ w \,|\, A_w^{(\lambda)} \textnormal{ is a seed over } R_w\}$ . While each $A_\natural^{(\lambda)}$ is an ultraseed if and only if $P_\lambda$ is in the ultrafilter, the set $P$ may or may not still live in the ultrafilter as ultrafilters need only be closed under finite intersections. There is no guarantee for infinite intersections.
\end{remk}


\section{Tensor Products and Base Change}

In this section we aim to prove the analogues of two major results from \cite{D07}: tensor products of seeds are seeds and base change applied to a seed maintains the seed property. As both results relate to tensor products, we need an initial elementary lemma relating tensor products to ultraproducts. The result is a partial generalization of \cite[Proposition 1.2]{AS07} for tensor products of arbitrary modules.

\begin{lemma} \label{ultratensor}
Let $A_\natural$ be an arbitrary ultraring, i.e., an ultraproduct of (not necessarily Noetherian) commutative rings $A_w$. Let $M_w$ and $N_w$ be arbitrary (not necessarily finitely generated) $A_w$-modules.
Then there exists a canonical map $M_\natural \otimes_{A_\natural} N_\natural \to \ulim (M_w \otimes_{A_w} N_w)$.
\end{lemma}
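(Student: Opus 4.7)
The plan is to construct the map via the universal property of $M_\natural \otimes_{A_\natural} N_\natural$. First I would define a set-theoretic map
\[
\Phi: M_\natural \times N_\natural \longrightarrow \ulim_w (M_w \otimes_{A_w} N_w), \qquad (\ulim_w m_w,\ \ulim_w n_w) \longmapsto \ulim_w (m_w \otimes_{A_w} n_w),
\]
using arbitrary lifts of elements of the ultramodules to sequences in $\prod_w M_w$ and $\prod_w N_w$.

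The first thing to verify is that $\Phi$ is well-defined, i.e., independent of the choice of representatives. If $(m_w)$ and $(m'_w)$ represent the same element of $M_\natural$, then $m_w = m'_w$ for almost all $w$, and for those $w$ we have $m_w \otimes n_w = m'_w \otimes n_w$ in $M_w \otimes_{A_w} N_w$; hence the two sequences of simple tensors agree on a set in $\mathcal{W}$ and therefore have the same ultralimit. The same argument applies to the second coordinate. Next I would check that $\Phi$ is $A_\natural$-bilinear: additivity in each slot follows from coordinatewise additivity in each $M_w \otimes_{A_w} N_w$ together with the fact that addition in an ultramodule is induced coordinatewise, and $A_\natural$-balancing, $\Phi(a_\natural m_\natural, n_\natural) = \Phi(m_\natural, a_\natural n_\natural)$, follows similarly from $A_w$-balancing of each $\otimes_{A_w}$ applied coordinatewise.

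Having established that $\Phi$ is an $A_\natural$-bilinear map out of $M_\natural \times N_\natural$, the universal property of the tensor product yields a unique $A_\natural$-linear map
\[
\varphi: M_\natural \otimes_{A_\natural} N_\natural \longrightarrow \ulim_w (M_w \otimes_{A_w} N_w)
\]
satisfying $\varphi(m_\natural \otimes n_\natural) = \ulim_w(m_w \otimes n_w)$, which is the desired canonical map. The construction is visibly functorial in $M$ and $N$, so no additional choices are involved beyond those implicit in the ultraproduct formalism. The only subtle point — and the step most worth writing out carefully — is the well-definedness check, since elements of $M_\natural$ and $N_\natural$ are equivalence classes rather than honest sequences; but this is precisely the standard argument used throughout \cite{Sch10} for defining maps out of ultramodules, and no finiteness or Noetherian hypothesis on $A_w$, $M_w$, or $N_w$ is needed. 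I do not expect any genuine obstacle here; the content of the lemma is really the assertion that such a canonical comparison map exists (it need not be an isomorphism without further hypotheses, which is why the stronger statement from \cite{AS07} requires finite generation).
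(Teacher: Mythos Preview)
Your proposal is correct and follows essentially the same route as the paper: define a map $M_\natural \times N_\natural \to \ulim_w(M_w \otimes_{A_w} N_w)$ by $(m_\natural, n_\natural) \mapsto \ulim_w(m_w \otimes n_w)$, verify well-definedness and $A_\natural$-bilinearity coordinatewise, and invoke the universal property of the tensor product. The only cosmetic difference is that the paper phrases well-definedness as ``$m_\natural = 0$ implies the image is $0$'' whereas you compare two representatives directly; these are equivalent once additivity is in hand.
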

\begin{proof}
Let $m_\natural, m'_\natural \in M_\natural$ and $n_\natural \in N_\natural$ with approximations $m_w$, $m'_w$,  and $n_w$, respectively. Define a map $M_\natural \times N_\natural \to (M_w \otimes_{A_w} N_w)_\natural$ by sending 
$$
(m_\natural,n_\natural) \mapsto \textnormal{ulim} (m_w\otimes n_w). 
$$
This map is well-defined on equivalence classes for $m_\natural$ or $n_\natural$. Indeed, if $m_\natural = 0$, then almost all $m_w = 0$ and so  $(m_w\otimes n_w)=0$ for almost all $w$ so that the ultraproduct is also $0$. A similar situation holds when $n_\natural = 0$. The map is also bilinear: we have 
$$
\begin{array}{rcl}
(m_\natural + m'_\natural,n_\natural) = ((m+m')_\natural,n_\natural) & \mapsto & \textnormal{ulim}((m_w+m'_w)\otimes n_w) \\
& = & \textnormal{ulim}(m_w\otimes n_w) + \textnormal{ulim}(m'_w\otimes n_w).
\end{array}
$$
Similarly, the map is additive in the second coordinate. Finally, if $a_\natural \in A_\natural$ with approximations $a_w$, then
$$
(a_\natural m_\natural,n_\natural) = ((a_w m_w)_\natural,n_\natural) \mapsto \textnormal{ulim}(a_w m_w \otimes n_w) = a_\natural \cdot  \textnormal{ulim}(m_w\otimes n_w).
$$
By the universal mapping property of tensor products, we now have the desired map $M_\natural \otimes_{A_\natural} N_\natural \to \ulim (M_w \otimes_{A_w} N_w)$.
\end{proof}

\begin{lemma}\label{ultratensorgen}
Let $A_\natural$ be an arbitrary ultraring, i.e., an ultraproduct of (not necessarily Noetherian) commutative rings $A_w$. Let $M_w^{(i)}$ be arbitrary (not necessarily finitely generated) $A_w$-modules indexed by a common set $\mathcal{I}$.
Then there exists a canonical map $\bigotimes_{i\in \mathcal{I}} M_\natural^{(i)} \to \ulim\left( \bigotimes_{i\in \mathcal{I}} M_w^{(i)}\right)$. [Note that the first tensor product has $A_\natural$ as its base ring while the second has the $A_w$ rings as bases.]
\end{lemma}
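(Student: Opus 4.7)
The plan is a straightforward generalization of the proof of Lemma~\ref{ultratensor}, exploiting the fact that well-definedness and multilinearity for a multilinear map out of a product are both checked one coordinate at a time. First, I would choose approximations $(m_w^{(i)})$ for each $m_\natural^{(i)} \in M_\natural^{(i)}$ and define a set-theoretic map
$$
\Phi: \prod_{i \in \mathcal{I}} M_\natural^{(i)} \to \ulim\left(\bigotimes_{i \in \mathcal{I}} M_w^{(i)}\right), \qquad \left(m_\natural^{(i)}\right)_{i \in \mathcal{I}} \mapsto \ulim_w \bigotimes_{i \in \mathcal{I}} m_w^{(i)}.
$$

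Next, I would verify that $\Phi$ descends to a well-defined map on equivalence classes one coordinate at a time. Fixing an index $i_0 \in \mathcal{I}$, suppose two sequences of approximations $(m_w^{(i_0)})$ and $(\tilde m_w^{(i_0)})$ represent the same element of $M_\natural^{(i_0)}$, so they agree for almost all $w$; then multilinearity in the $i_0$-slot of each $\bigotimes_{i \in \mathcal{I}} M_w^{(i)}$ forces the corresponding pure tensors to agree for almost all $w$, and hence to have the same ultralimit. Verifying $A_\natural$-multilinearity of $\Phi$ in each coordinate is then identical to the corresponding calculations in the two-factor case of Lemma~\ref{ultratensor}: additivity and compatibility with scalar multiplication each take place in a single tensor slot and reduce to the same componentwise identities used there. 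Once $\Phi$ is shown to be $A_\natural$-multilinear, the universal property of the tensor product $\bigotimes_{i \in \mathcal{I}} M_\natural^{(i)}$ over $A_\natural$ produces the required canonical $A_\natural$-linear factorization into $\ulim\left(\bigotimes_{i \in \mathcal{I}} M_w^{(i)}\right)$.

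The main conceptual subtlety, which I would treat as the principal obstacle, is pinning down the meaning of the possibly infinite tensor products on each side so that they serve as the universal targets for $A_\natural$- or $A_w$-multilinear maps out of the corresponding unrestricted products. The standard construction as a quotient of the free module on pure-tensor symbols by the multilinearity relations (equivalently, a colimit of finite tensor products along inclusions of finite subsets of $\mathcal{I}$) is compatible with the coordinate-by-coordinate checks above, since each relation used involves only finitely many slots. With that convention in place the argument reduces cleanly to applying the two-module reasoning of Lemma~\ref{ultratensor} one tensor factor at a time.
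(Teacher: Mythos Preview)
Your proposal is correct and follows essentially the same approach as the paper: define a map on the product $\prod_{i\in\mathcal{I}} M_\natural^{(i)}$, verify it is well-defined and $A_\natural$-linear in each coordinate exactly as in Lemma~\ref{ultratensor}, and then invoke the universal property of the tensor product. The paper's own proof is in fact a one-sentence reference back to the argument of Lemma~\ref{ultratensor}; your version supplies the details that sketch omits and, appropriately, flags the interpretation of the possibly infinite tensor product as the only point needing care.
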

\begin{proof}
A similar argument to that above can be used to show that a similar map defined on the arbitrary product $\Pi_{i\in \mathcal{I}} M_\natural^{(i)}$ is $A_\natural$-linear in every coordinate, which then gives a map from the tensor product to the desired ultraproduct of tensor products.
\end{proof}

We present one of our main theorems next, which shows that tensor products of rational seeds are also rational seeds, an analogue of \cite[Theorem 8.4]{D07}.

\begin{thm} \label{tensorseeds}
Let $R$ be a local domain of \echarz\ with Lefschetz hull $\mathfrak{D}(R)$  and \charp\ approximations $R_w$ as in Discussion \ref{hypoth}. Let $(S_i)_{i\in \mathcal{I}}$ be a finite family of rational seeds over $R$. Then $\bigotimes_{i\in \mathcal{I}} S_i$ (tensored over $R$) is also a rational seed. Consequently, if $B_\natural$ and $B'_\natural$ are (balanced) big \CM\ $R$-algebras and ultrarings, then there exists a (balanced) big \CM\ $R$-algebra and ultraring $C_\natural$ filling in the commutative diagram:
$$
\xymatrix{B_\natural \ar[r] & C_\natural \\ R\ar[r]\ar[u] & B'_\natural\ar[u]}
$$
\end{thm}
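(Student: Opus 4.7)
The plan is to reduce to the prime-characteristic statement \cite[Theorem 8.4]{D07} by assembling the seed maps of each $S_i$ into a compatible system of big Cohen-Macaulay algebras over the approximations $R_w$, taking the tensor product there, and then passing to an ultraproduct of balanced big \CM\ algebras.

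First, I would apply Lemma~\ref{ultraseed} to each $S_i$ to obtain an $R$-algebra map $S_i \to B_{i,\natural}$, where $B_{i,\natural} = \ulim_w B_{i,w}$ is an ultraring and a balanced big \CM\ $R$-algebra; inspecting the proof of that lemma shows I may arrange for almost all $B_{i,w}$ to be balanced big \CM\ $R_w$-algebras, and in particular seeds over $R_w$. Tensoring the given maps over $R$ produces a map
$$
\bigotimes_{i\in\mathcal{I}} S_i \;\longrightarrow\; \bigotimes_{i\in\mathcal{I},\, R} B_{i,\natural}.
$$
Because the structure map $R\to B_{i,\natural}$ factors through $\mathfrak{D}(R)$, the natural base-change surjection $\bigotimes_R B_{i,\natural} \surj \bigotimes_{\mathfrak{D}(R)} B_{i,\natural}$ composed with Lemma~\ref{ultratensorgen} yields an $R$-algebra map
$$
\bigotimes_{i\in\mathcal{I}} S_i \;\longrightarrow\; \ulim_w \bigotimes_{i\in\mathcal{I},\, R_w} B_{i,w}.
$$

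Next, I would invoke the prime-characteristic analogue \cite[Theorem 8.4]{D07}: for almost all $w$, since each $B_{i,w}$ is a seed over $R_w$, the finite tensor product $\bigotimes_{R_w} B_{i,w}$ is again a seed over $R_w$, and hence maps to a balanced big \CM\ $R_w$-algebra $C_w$; for the exceptional $w$ (a set outside the ultrafilter), set $C_w = 0$. Putting $C_\natural = \ulim_w C_w$ and applying Lemma~\ref{ultrabigCM}(c) shows that $C_\natural$ is a balanced big \CM\ $R$-algebra and an ultraring, and the composite $\bigotimes_{i\in\mathcal{I}} S_i \to C_\natural$, combined with the implication (3)$\Rightarrow$(1) of Lemma~\ref{ultraseed}, certifies $\bigotimes_{i\in\mathcal{I}} S_i$ as a rational seed. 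For the commutative diagram, observe that each of $B_\natural$ and $B'_\natural$ is itself a rational seed via its identity map (by (2)$\Rightarrow$(1) of Lemma~\ref{ultraseed}), so applying the first half to the pair $\{B_\natural,B'_\natural\}$ produces the required $C_\natural$ with the two inclusions on the left edge of the square.

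The principal obstacle I anticipate is the tensor-product bookkeeping across three base rings --- $R$, $\mathfrak{D}(R)$, and the individual $R_w$ --- together with checking that the resulting composite is well-defined and $R$-linear; in particular, one must verify that the base-change surjection $\bigotimes_R B_{i,\natural} \surj \bigotimes_{\mathfrak{D}(R)} B_{i,\natural}$ is legitimate given that each $B_{i,\natural}$'s $R$-algebra structure factors through $\mathfrak{D}(R)$, and that Lemma~\ref{ultratensorgen} applies with $A_w = R_w$. Once this bridge into the characteristic-$p$ ultraproduct is in place, the remainder is standard transfer via \Los' Theorem and the earlier lemmas.
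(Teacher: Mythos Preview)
Your proposal is correct and follows essentially the same route as the paper: map each $S_i$ into an ultraring whose components are seeds over $R_w$, tensor, pass through $\mathfrak{D}(R)$ and Lemma~\ref{ultratensorgen} to reach $\ulim_w\bigl(\bigotimes_{R_w}\,\cdot\,\bigr)$, and then invoke \cite[Theorem~8.4]{D07} componentwise. The only cosmetic differences are that the paper works with ultraseeds $T_\natural^{(i)}$ straight from the definition rather than upgrading first to balanced big \CM\ ultrarings via Lemma~\ref{ultraseed}, and it stops once $\bigotimes_i S_i$ maps to an ultraseed rather than pushing on to an explicit $C_\natural$; you should also make explicit (as the paper does) that finiteness of $\mathcal{I}$ is what lets you intersect the ``almost all'' sets inside the ultrafilter so that every $B_{i,w}$ is a seed simultaneously.
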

\begin{proof}
By the definition of rational seed, for each $i\in \mathcal{I}$ we have a map $S_i \to T_\natural^{(i)}$ such that $T_\natural^{(i)}$ is an ultraproduct of $R_w$-algebras, almost all of which are seeds over $R_w$. 
As the set $\mathcal{I}$ is finite and a finite intersection of sets in an ultrafilter still lies in the ultrafilter, we can say that almost all $T_w^{(i)}$ are $R_w$-seeds across all $i\in \mathcal{I}$ simultaneously. (In other words, the ``almost all $w$" is independent of the choice of $i$.) Thus for almost all $w$, we have that $\bigotimes_{i\in \mathcal{I}} T_w^{(i)}$ (tensored over $R_w$) is a seed over $R_w$ by \cite[Theorem 8.4]{D07}. Finally note that we have maps
$$
\bigotimes_{i\in \mathcal{I}} S_i\to \bigotimes_{i\in \mathcal{I}} T_\natural^{(i)} \to \ulim\left( \bigotimes_{i\in \mathcal{I}} T_w^{(i)} \right),
$$
where the first product uses $R$ as base, the second uses $\mathfrak{D}(R)$ as base, the third uses $R_w$ as base, and the last map follows from Lemma~\ref{ultratensorgen}. This shows that $\bigotimes_{i\in \mathcal{I}} S_i$ is a rational seed.

Now that we have established that tensor products of rational seeds are rational seeds, we can deduce the final claim by noting that $B_\natural$ and $B'_\natural$ are rational seeds by Lemma~\ref{ultraseed} and Lemma~\ref{ultrabigCM}. As a result, there is a map 
$$
B_\natural\otimes_R B'_\natural\to C_\natural
$$
as the first part of the proof shows that $B_\natural\otimes_R B'_\natural$ is a rational seed, and Lemma~\ref{ultrabigCM} shows that there is a map to a balanced big \CM\ $R$-algebra $C_\natural$ that is also an ultraring.
\end{proof}

\begin{remk}
The \charp\ version of the theorem above is true for arbitrarily large families of tensor products. We kept the \echarz\ version limited to finite families because of subtleties related to ultrafilters. One can conclude that an infinite tensor product of rational seeds is a rational seed as long as the set 
$$
\{ w \,|\, T_w^{(i)} \textnormal{ is a seed over } R_w \textnormal{ for all } i \in \mathcal{I} \}
$$
is a member of the ultrafilter. This is similar to the restriction applied in Proposition~\ref{limultraseed} earlier.
\end{remk}

We are also able to derive an \echarz\ version of \cite[Theorem 8.10]{D07}, which will show that the rational seed property is stable under base change between local domains of \echarz.

\begin{thm}\label{basechange} 
Let $R$ and $S$ be local domains of \echarz\ with a local map $R\to S$ and Lefschetz hulls $\mathfrak{D}(R)$ and $\mathfrak{D}(S)$ with \charp\ approximations $R_w$ and $S_w$ as in Discussion \ref{hypoth}. Suppose that $T$ is a rational seed over $R$. Then $T\otimes_R S$ is a rational seed over $S$. Consequently, if $B_\natural$ is an ultraproduct and a big \CM\ $R$-algebra, then there exists $C_\natural$ which is a balanced big \CM\ algebra over $S$ and an ultraproduct that fills in a commutative diagram:
$$
\xymatrix{B_\natural \ar[r] & C_\natural \\ R\ar[r]\ar[u] & S\ar[u]}
$$
\end{thm}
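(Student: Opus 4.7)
The plan is to follow the same template as Theorem~\ref{tensorseeds}, swapping in the characteristic $p$ base change theorem \cite[Theorem 8.10]{D07} where the proof of Theorem~\ref{tensorseeds} used the characteristic $p$ tensor product result. Unpacking the hypothesis that $T$ is a rational seed over $R$ gives an $R$-algebra map $T \to T_\natural = \ulim_w T_w$ with $T_w$ a seed over $R_w$ for almost all $w$. By the standing setup (Discussion~\ref{hypoth}, via \cite[(4.26)]{AS07}), the local map $R \to S$ lifts to a compatible square of Lefschetz hulls, and in particular the map $\mathfrak{D}(R) \to \mathfrak{D}(S)$ is the ultraproduct of a family of local maps $R_w \to S_w$. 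By Theorem~\ref{transfer}(3), almost all $R_w$ and $S_w$ are complete local domains of prime characteristic, so \cite[Theorem 8.10]{D07} applies and yields that $T_w \otimes_{R_w} S_w$ is a seed over $S_w$ for almost all $w$.

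Next I would assemble the composite of $S$-algebra maps
\[
T \otimes_R S \;\longrightarrow\; T_\natural \otimes_{\mathfrak{D}(R)} \mathfrak{D}(S) \;\longrightarrow\; \ulim_w\bigl( T_w \otimes_{R_w} S_w\bigr),
\]
where the first map comes from $T \to T_\natural$ and $S \to \mathfrak{D}(S)$ by the universal property of the tensor product (over $R \to \mathfrak{D}(R) \to \mathfrak{D}(S)$), and the second is the canonical map supplied by Lemma~\ref{ultratensor} with $A_w = R_w$, $M_w = T_w$, $N_w = S_w$. The target is an ultraring whose components are seeds over $S_w$ for almost all $w$, so $T \otimes_R S$ is by definition a rational seed over $S$.

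For the consequence, given an ultraring $B_\natural$ that is a big \CM\ $R$-algebra, Lemma~\ref{ultraseed} makes $B_\natural$ a rational seed over $R$. Applying the first part of the theorem, $B_\natural \otimes_R S$ is a rational seed over $S$, and Lemma~\ref{ultraseed} again produces an $S$-algebra map $B_\natural \otimes_R S \to C_\natural$ with $C_\natural$ a balanced big \CM\ $S$-algebra and an ultraring. The structural maps $B_\natural \to B_\natural \otimes_R S$ and $S \to B_\natural \otimes_R S$ then close up the required commutative square.

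The main technical point I expect to need care over is the compatibility of the canonical map from Lemma~\ref{ultratensor} with the $\mathfrak{D}(S)$-module structure: one has to verify that the map out of $T_\natural \otimes_{\mathfrak{D}(R)} \mathfrak{D}(S)$ is actually $\mathfrak{D}(S)$-linear, not just $\mathfrak{D}(R)$-linear, which follows from tracing through the bilinear construction in Lemma~\ref{ultratensor} with $N_\natural = \mathfrak{D}(S)$ acting on itself. Once this compatibility is in hand, the rest is bookkeeping with universal properties and the standing Lefschetz-hull diagram.
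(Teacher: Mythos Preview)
Your proposal is correct and follows essentially the same route as the paper: unpack the rational seed to get $T\to T_\natural$, apply \cite[Theorem~8.10]{D07} componentwise to obtain seeds $T_w\otimes_{R_w}S_w$ over $S_w$, and use Lemma~\ref{ultratensor} to produce the $S$-algebra map from $T\otimes_R S$ into the resulting ultraseed, then invoke Lemma~\ref{ultraseed} for the consequence. The paper inserts one extra intermediate step $T\otimes_R S \to T_\natural \otimes_R S$ before passing to $T_\natural \otimes_{\mathfrak{D}(R)} \mathfrak{D}(S)$, but this is only a cosmetic difference; your added care about the hypotheses of \cite[Theorem~8.10]{D07} and the $\mathfrak{D}(S)$-linearity of the Lemma~\ref{ultratensor} map is appropriate and does not diverge from the paper's argument.
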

\begin{proof}
As $T$ is a rational seed over $R$, there exists an $R$-linear map $T \to U_\natural$, where almost all $U_w$ are seeds over the complete, local domains $R_w$ of \charp. By \cite[Theorem 8.10]{D07}, we know that $U_w\otimes_{R_w} S_w$ is a seed over $S_w$ for almost all $w$. Therefore $\ulim(U_w\otimes_{R_w} S_w)$ is an ultraseed over $S$. Note that we have $S$-linear maps:
$$
T\otimes_R S \to U_\natural \otimes_R S \to U_\natural \otimes_{\mathfrak{D}(R)} \mathfrak{D}(S) \to \ulim(U_w\otimes_{R_w} S_w),
$$
where the last map follows from Lemma~\ref{ultratensor}. This chain of $S$-algebra maps shows that $T\otimes_R S$ is a rational seed over $S$.

In the special case that $T=B_\natural$ is a big \CM\ $R$-algebra, $B_\natural \otimes_R S$ will map further to an $S$-algebra $C_\natural$ with the desired properties by Lemma~\ref{ultraseed}.
\end{proof}

\section{Other Properties of Seeds}

In this section we present two primary results, both of which are \echarz\ analogues of results from \cite{D07}. The first result will characterize rational seeds in terms of ``durable colon-killers" while the second will show that rational seeds can be mapped to balanced big \CM\ algebras with a host of other nice properties.

We start by restating the definition of durable colon-killer from \cite[Definition 4.7]{D07}.

\begin{defn} 
For a local ring $(R,\mathfrak{m})$ and an $R$-algebra $S$,
an element $c\in S$ is called a \textit{weak durable colon-killer} over $R$
if for some system of parameters $x_1,\ldots,x_n$ of $R$,
$$c( (x_1^{t},\ldots,x_k^{t})S:_S x_{k+1}^{t} ) \subseteq
(x_1^{t},\ldots,x_k^{t})S,$$
for all $1\leq k\leq n-1$ and all $t\in\nn$, and if for each 
$N\geq 1$, there exists a $k \ge 1$ such that 
$c^N\not\in \bigcap_k \mathfrak{m}^k S$. An element $c\in
S$ will be called a \textit{durable colon-killer} over $R$
if it is a weak durable colon-killer for every system of parameters of $R$.
\end{defn}

Notice that if $S=R$, then all colon-killers in $R$ that are not
nilpotent are durable colon-killers, and if $R$ is reduced, all
nonzero colon-killers are durable colon-killers. The following is an analogue of \cite[Theorem 4.8]{D07}.

\begin{thm}
Let $(R,\mathfrak{m})$ be a local domain of \echarz\ with Lefschetz hull $\mathfrak{D}(R)$  and \charp\ approximations $R_w$ as in Discussion \ref{hypoth}. Then the following three properties are equivalent for an $R$-algebra $S$:
\begin{nlist}
\item $S$ is a rational seed.
\item There exists a map $S\to T_\natural$ such that $T_\natural$ contains a durable colon-killer $c$.
\item There exists a map $S\to T_\natural$ with element $c\in T_\natural$ such that $c = \ulim\ c_w$ where $c_w$ is a weak durable colon-killer in almost all $T_w$.
\end{nlist}
Moreover, property (4) below implies (1)--(3):
\begin{nlist}
\setcounter{item}{3}
\item There exists a map $S\to T_\natural$ with element $c\in T_\natural$ such that $c = \ulim\ c_w$ where $c_w$ is a durable colon-killer in almost all $T_w$.
\end{nlist}
\end{thm}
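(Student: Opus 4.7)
The plan follows the characteristic $p$ analogue \cite[Theorem 4.8]{D07} adapted to the ultraproduct setting. The implication $(4) \Rightarrow (3)$ is immediate since a durable colon-killer is in particular weak durable. For the equivalence $(1) \Leftrightarrow (3)$, apply \cite[Theorem 4.8]{D07} at each approximation: $T_w$ is a seed over $R_w$ if and only if $T_w$ contains a weak durable colon-killer. Thus almost all $T_w$ being seeds matches almost all $T_w$ containing such elements $c_w$, and setting $c = \ulim c_w$ (padded arbitrarily on the exceptional $w$) converts between the two descriptions.

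For $(1) \Rightarrow (2)$, Lemma~\ref{ultraseed} supplies a map $S \to B_\natural$ to a balanced big Cohen-Macaulay $R$-algebra that is an ultraring. In $B_\natural$, the element $1$ is a durable colon-killer over $R$: every \sop\ $\bx$ for $R$ is a regular sequence on $B_\natural$, so each colon $((x_1^t,\ldots,x_k^t)B_\natural :_{B_\natural} x_{k+1}^t)$ coincides with $(x_1^t,\ldots,x_k^t)B_\natural$, and the colon-killing holds trivially; $\mathfrak{m}B_\natural \neq B_\natural$ yields $1 \not\in \mathfrak{m}B_\natural$, supplying the non-nilpotency condition.

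The deepest step, $(2) \Rightarrow (1)$, is where I expect the main obstacle. Starting from $c \in T_\natural$ durable over $R$, the plan is to produce an $R$-algebra map $T_\natural \to B_\natural$ to a balanced big Cohen-Macaulay $R$-algebra that is also an ultraring, whence $S \to T_\natural \to B_\natural$ exhibits $S$ as a rational seed by Lemma~\ref{ultraseed}. Fix a \sop\ $\bx$ for $R$ and run an ultraring version of Hochster's algebra-modification construction: enumerate the countably many colon relations on powers of $\bx$ that need to be resolved; at stage $n$, set $B_{n,\natural} = \ulim_w B_{n,w}$, where $B_{n,w}$ is the $n$-fold algebra modification of $T_w$ addressing approximations of the first $n$ chosen relations for almost all $w$. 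Each finite stage involves only finitely many $(k,t)$-colon conditions, so \Los' Theorem applied to the durability of $c$ over $R$ delivers the colon-killing in almost all $T_w$ for those $(k,t)$ (finite intersections of ultrafilter sets remain in the ultrafilter), and, combined with the non-nilpotency of $c$, this gives stagewise non-collapse $\mathfrak{m}B_{n,\natural} \neq B_{n,\natural}$. Setting $B_\natural := \dirlim_n B_{n,\natural}$, which remains an ultraring by Lemma~\ref{directultralimit}, one inherits regularity of $\bx$ from the stages, and non-triviality at every finite stage lifts to $\mathfrak{m}B_\natural \neq B_\natural$ in the limit. The principal difficulty is that $c$ is durable only over $R$ and not a priori over any individual $R_w$, so Hochster's characteristic $p$ construction cannot be applied verbatim inside a single $T_w$; the stagewise ultraring passage with non-collapse verified at each $B_{n,\natural}$ separately, together with Lemma~\ref{directultralimit} for the direct limit, is what circumvents the countable intersection problem for ultrafilters, and care is also needed because polynomial rings do not commute with ultraproducts, so each $B_{n,\natural}$ is defined directly as an ultraring rather than as a modification of $T_\natural$ itself.
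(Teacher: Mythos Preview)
Your $(1)\Rightarrow(2)$, $(3)\Rightarrow(1)$, $(1)\Rightarrow(3)$, and $(4)\Rightarrow(3)$ are correct and essentially match the paper (the paper routes $(1)\Rightarrow(3)$ through $(2)$, but your direct componentwise use of \cite[Theorem~4.8]{D07} is equally valid). The substantive divergence is how you close the cycle at $(2)$. The paper does not attempt $(2)\Rightarrow(1)$ directly; it proves $(2)\Rightarrow(3)$ instead. One simply approximates the durable colon-killer $c\in T_\natural$ by $c_w\in T_w$, fixes one system of parameters $\bx$ of $R$ with approximations $\bx_w$, and applies \Los' Theorem: each colon-killing condition $c\bigl((x_1^t,\ldots,x_k^t)T_\natural:_{T_\natural}x_{k+1}^t\bigr)\subseteq(x_1^t,\ldots,x_k^t)T_\natural$ is first-order in $c,\bx$ and transfers to almost all $T_w$; for durability one argues by contradiction (if $c_w^N\in\bigcap_k\mathfrak{m}_w^kT_w$ for almost all $w$ then \Los\ forces $c^N\in\bigcap_k\mathfrak{m}^kT_\natural$, contradicting durability of $c$). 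This lands in $(3)$, and $(3)\Rightarrow(1)$ is the implication you already have. Thus the ``principal difficulty'' you anticipate---that $c$ is durable only over $R$ and not visibly over any single $R_w$---is precisely what the paper dissolves by the \Los\ transfer, handing all the work to \cite[Theorem~4.8]{D07} applied in each $T_w$.

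Your proposed $(2)\Rightarrow(1)$ construction, by contrast, has a genuine gap. You enumerate countably many relations in $T_\natural$ and trivialize their approximations stagewise, but this does not force $\bx$ to become regular on $B_\natural=\dirlim_n B_{n,\natural}$: a bad relation in $B_{n,\natural}$ need not come from your enumeration in $T_\natural$ at all (new elements of $B_{n,\natural}$ can participate in new bad relations), and the standard modification argument requires \emph{every} bad relation appearing at some stage to be killed at a later one, which a fixed countable list chosen in advance from $T_\natural$ cannot guarantee. So ``regularity of $\bx$ is inherited from the stages'' is unjustified---$\bx$ is not regular on any individual $B_{n,\natural}$, and you have not shown the limit repairs this. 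The paper's route $(2)\Rightarrow(3)\Rightarrow(1)$ avoids the entire construction.
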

\begin{proof}
(1) along with Lemma~\ref{ultraseed} imply that there exists $S\to B_\natural$ such that $B_\natural$ is a balanced big \CM\ $R$-algebra. Thus, (1) implies (2) using $T_\natural = B_\natural$ and $c=1$.

(2) implies (3) when one takes approximations $c_w$ of $c$ and applies \L{}os' Theorem. Indeed, fix a \sop\ $x_1, \ldots, x_d$ for $R$ and take approximations $x_{1w}, \ldots, x_{dw}$ in each $R_w$ via $\mathfrak{D}(R)$. Then almost all sequences $x_{1w},\ldots,x_{dw}$ form a \sop\ in $R_w$ by \cite[Corollary 4.3.8]{Sch10}. Powers of the \sop\ and their approximations are also \sops\ for $R$ and almost all $R_w$, respectively. By \Los' Theorem and the fact that $c$ is a colon-killer, almost all $c_w$ will be weak colon-killers with respect to the powers of the approximations of the \sop\ $\bx$. They will be durable as $c^N \not\in \bigcap_k \mathfrak{m}^k T_\natural$ for all $N$ implies that the same is true for almost all of the approximations $c_w^N$ by \Los' Theorem. Indeed, suppose that for some $N$ we have $c_w^N \in \bigcap_k \mathfrak{m}_w^k T_w$ for almost all $w$. As $\mathfrak{m}$ is a finitely generated ideal, each inclusion can be specified equationally using polynomials and thus \Los' Theorem would imply that the same inclusion will hold for $c^N$, a contradiction. 

We will show that (3) and (4) each imply (1) to complete the proof. By \cite[Theorem 4.8]{D07}, (3) and (4) each imply that almost all of the $T_w$ are seeds over $R_w$. Thus, the map $S\to T_\natural$ shows that $S$ is a rational seed.
\end{proof}

We also present an analogue of \cite[Theorem 7.8]{D07} showing that rational seeds map to ultrarings with a host of other nice properties.

\begin{thm}\label{ultranice}
Let $(R,\mathfrak{m})$ be a local domain of \echarz\ with Lefschetz hull $\mathfrak{D}(R)$  and \charp\ approximations $R_w$ as in Discussion \ref{hypoth}. If $S$ is a rational seed over $R$, then $S$ maps to a quasilocal, absolutely integrally closed, $\mathfrak{m}$-adically separated, balanced big \CM\ $R$-algebra domain $B_\natural$.
\end{thm}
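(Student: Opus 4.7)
The plan is to transfer the characteristic $p$ result \cite[Theorem~7.8]{D07} componentwise to the approximations $R_w$, form the ultraproduct, and then repair the one property that does not behave well under ultraproducts.

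First, by Lemma~\ref{ultraseed} there is an $R$-algebra map $S \to B_\natural = \ulim_w B_w$ to an ultraring that is a balanced big \CM\ $R$-algebra. Lemma~\ref{ultrabigCM}(a) together with Theorem~\ref{transfer}(3) show that almost all $B_w$ are big \CM\ algebras, and hence seeds, over the complete local \charp\ domains $R_w$. Applying \cite[Theorem~7.8]{D07} componentwise produces maps $B_w \to C_w$ where each $C_w$ is a quasilocal, absolutely integrally closed, $\mathfrak{m}_w$-adically separated, balanced big \CM\ $R_w$-algebra domain; put $C_w = 0$ for the remaining $w$. Let $C_\natural = \ulim_w C_w$; the composite $S \to B_\natural \to C_\natural$ is the candidate map.

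Most of the desired properties transfer cleanly to $C_\natural$. Balanced big \CM-ness is Lemma~\ref{ultrabigCM}(c), and the domain property is \cite[2.4.10]{Sch10}. Quasilocality follows from \Los' Theorem applied to the axiom ``whenever $a+b=1$, either $a$ or $b$ is a unit'': for any $a_\natural + b_\natural = 1$ in $C_\natural$, exactly one of the sets of $w$ on which $a_w$ resp.\ $b_w$ is a unit lies in $\mathcal{W}$, which yields an inverse in $C_\natural$. Absolute integral closure is similar: any monic $p(X) \in C_\natural[X]$ has approximations $p_w(X) \in C_w[X]$, each admitting a root $\alpha_w \in C_w$, and $\ulim_w \alpha_w$ is a root of $p$ by \Los' Theorem.

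The principal obstacle is $\mathfrak{m}$-adic separation, because $\mathcal{W}$ is not closed under countable intersections: knowing that for every $k \in \nn$ the set of $w$ with $c_w \in \mathfrak{m}_w^k C_w$ lies in $\mathcal{W}$ does not force their intersection over $k$ to lie in $\mathcal{W}$, so $\bigcap_k \mathfrak{m}^k C_\natural$ can be nonzero. To repair this I would compose with the map from $C_\natural$ to its $\mathfrak{m}$-adic separated completion, invoking the Bartijn--Strooker result \cite{BS83} (cited earlier after the definition of seeds) both to obtain $\mathfrak{m}$-adic separation for free and to preserve the balanced big \CM\ property. Quasilocality of the completion is a standard check. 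The hardest point, where I expect the real work to lie, is verifying that absolute integral closure and the domain property survive the completion: for the former, one attempts to lift roots from $C_\natural$ via a Hensel-type argument on Cauchy sequences modulo $\mathfrak{m}^k$, and for the latter, one must show that $\bigcap_k \mathfrak{m}^k C_\natural$ is contained in a prime ideal of $C_\natural$.
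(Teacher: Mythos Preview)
Your construction of the ultraproduct and your verification of the balanced big \CM, domain, quasilocal, and absolutely integrally closed properties track the paper's proof closely; the paper cites \cite[(2.4.8), (2.4.10)]{Sch10} for quasilocality and the domain property rather than arguing them by hand, but the content is the same.

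The divergence is at $\mathfrak{m}$-adic separation. The paper does \emph{not} pass to a completion: it argues directly that if $b_\natural \in \bigcap_k \mathfrak{m}^k B_\natural$, then via \Los' Theorem one obtains $b_w \in \bigcap_k \mathfrak{m}_w^k B_w$ for almost all $w$, whence $b_w = 0$ by separation of the $B_w$ and hence $b_\natural = 0$. Your countable-intersection objection to that step is well-founded: for each fixed $k$ one gets $b_w \in \mathfrak{m}_w^k B_w$ on an ultrafilter set $W_k$, but nothing forces $\bigcap_k W_k$ into $\mathcal{W}$, and an element such as $\ulim_w x_w^{n(w)}$ with $n$ unbounded on every ultrafilter set would lie in every $\mathfrak{m}^k B_\natural$ while remaining nonzero. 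So the paper's argument, as written, does seem to skip over exactly the difficulty you identified.

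That said, your proposed repair is itself incomplete, and you acknowledge as much. Completion buys separation and, via \cite{BS83}, preserves balanced big \CM-ness, but you have not shown that $\widehat{C_\natural}$ remains a domain or absolutely integrally closed. The Hensel-type root-lifting you sketch presupposes the domain property, and verifying that $\bigcap_k \mathfrak{m}^k C_\natural$ is prime (equivalently, that the completion has no zero-divisors) is genuinely nontrivial for a non-Noetherian $C_\natural$. As written you have traded one unresolved step for two, so neither your argument nor the paper's, read literally, closes the separation gap.
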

\begin{proof}
There exists a map $S\to T_\natural$ such that almost all $T_w$ are seeds over $R_w$. By \cite[Theorem 7.8]{D07}, almost all of the $T_w$ can be mapped to quasilocal, absolutely integrally closed, $\mathfrak{m}$-adically separated, balanced big \CM\ $R_w$-algebra domains $B_w$. (Set $B_w$ to be the zero ring for the cases where $T_w$ is not a seed so that there is still a map $T_w\to B_w$.) By Lemma~\ref{ultrabigCM}(c), we have that $B_\natural$ is a balanced big \CM\ $R$-algebra with a composite map $S\to T_\natural \to B_\natural$ induced by the maps $T_w\to B_w$. Note that \cite[(2.4.10)]{Sch10} implies $B_\natural$ is a domain as almost all $B_w$ are domains, and \cite[(2.4.8)]{Sch10} implies that $B_\natural$ will be quasilocal. 

To see that $B_\natural$ is also absolutely integrally closed, note that as the $B_w$ are almost all absolutely integrally closed, all monic polynomials split in those $B_w$. Consider a monic polynomial in $B_\natural$. Then almost all approximations over $B_w$ will split. By \Los' Theorem, the original polynomial will split over $B_\natural$ as well. 

Finally, to see that $B_\natural$ is $\mathfrak{m}$-adically separated,  let $b_\natural \in \bigcap_k \mathfrak{m}^k B_\natural$. Via the Lefschetz hull $\mathfrak{D}(R)$ and \Los' Theorem we obtain approximations $b_w \in \bigcap_k \mathfrak{m}_w^k B_w$ for almost all $w$. Due to the separation for almost all $w$, we have that $b_w=0$ for almost all $w$. Thus, $b=0$ and so $B_\natural$ is $\mathfrak{m}$-adically separated.
\end{proof}

\section{Integral Extensions of Seeds}

We now show the final promised result: integral extensions of rational seeds are also rational seeds, which is an analogue of 
\cite[Theorem 6.9]{D07}.

\begin{thm}
Let $R$ be a local domain of \echarz\ with Lefschetz hull $\mathfrak{D}(R)$  and \charp\ approximations $R_w$ as in Discussion \ref{hypoth}. Let $S$ be a rational seed over $R$, and let $T$ be an integral extension of $S$. Then $T$ is also a rational seed.
\end{thm}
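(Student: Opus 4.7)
The plan is to realize $S$ inside a very well-behaved ultraring target and then extend across the integral map $S\hookrightarrow T$ using absolute integral closure of the target. Since $S$ is a rational seed, Theorem~\ref{ultranice} furnishes an $R$-algebra map $\phi\colon S\to B_\natural$ in which $B_\natural$ is an ultraring that is simultaneously a quasilocal, absolutely integrally closed, $\mathfrak{m}$-adically separated, balanced big \CM\ $R$-algebra domain. By the implication (2)$\Rightarrow$(1) of Lemma~\ref{ultraseed}, it will suffice to extend $\phi$ to an $R$-algebra map $\widetilde{\phi}\colon T\to B_\natural$: that extension will exhibit $T$ as a rational seed, because $B_\natural$ is both an ultraring and a balanced big \CM\ $R$-algebra.

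To construct $\widetilde{\phi}$, I would first set $\mathfrak{p}=\ker\phi$, which is a prime of $S$ since $B_\natural$ is a domain. Because $S\hookrightarrow T$ is integral, lying-over produces a prime $\mathfrak{q}$ of $T$ with $\mathfrak{q}\cap S=\mathfrak{p}$, making $S/\mathfrak{p}\hookrightarrow T/\mathfrak{q}$ an integral extension of domains. Writing $K=\textup{Frac}(S/\mathfrak{p})$ and $L=\textup{Frac}(T/\mathfrak{q})$, the extension $L/K$ is algebraic. The induced inclusion $S/\mathfrak{p}\hookrightarrow B_\natural$ passes to an embedding $K\hookrightarrow \textup{Frac}(B_\natural)$. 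Since $B_\natural$ is an absolutely integrally closed domain, $\textup{Frac}(B_\natural)$ is algebraically closed and $B_\natural$ is normal in its fraction field; therefore $K\hookrightarrow \textup{Frac}(B_\natural)$ extends to $L\hookrightarrow \textup{Frac}(B_\natural)$, and the image of $T/\mathfrak{q}$ consists of elements integral over $B_\natural$, hence landing in $B_\natural$. Composing with $T\twoheadrightarrow T/\mathfrak{q}$ yields $\widetilde{\phi}\colon T\to B_\natural$, and its restriction to $S$ factors as $S\twoheadrightarrow S/\mathfrak{p}\hookrightarrow B_\natural$, which is exactly $\phi$, so compatibility with the $R$-algebra structure is automatic.

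The primary technical point is the extension step, where one must cope with the possibility that $S$ is not a domain and $\phi$ is not injective; reducing modulo $\mathfrak{p}$ and $\mathfrak{q}$ isolates a clean domain-to-domain integral extension, and then the absolute integral closure of $B_\natural$ supplies both the algebraically closed fraction field needed to extend the embedding and the normality needed to return to $B_\natural$. Once $\widetilde{\phi}$ is assembled, Lemma~\ref{ultraseed} immediately delivers the conclusion that $T$ is a rational seed. A pleasant feature of this approach is that it avoids a fresh reduction to characteristic $p$: Theorem~\ref{ultranice} has already absorbed that work when producing the absolutely integrally closed ultraring target $B_\natural$.
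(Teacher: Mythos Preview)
Your argument is correct. The only point worth spelling out is that an absolutely integrally closed domain is automatically normal (a monic polynomial witnessing integrality over $B_\natural$ splits into linear factors in $B_\natural[x]$, so any root already lies in $B_\natural$) and has algebraically closed fraction field (clear denominators and use the same trick); you invoke both facts, and both hold.

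Your route, however, is genuinely different from the paper's. The paper also begins with Theorem~\ref{ultranice} to obtain an ultraseed domain $U_\natural$ receiving $S$, but rather than extending the map across $S\hookrightarrow T$ by exploiting absolute integral closure of the target, it forms the pushout $T/IT\otimes_{S/I}U_\natural$, checks (via \cite[Lemma~6.2]{D07}) that $U_\natural$ embeds in it, and thereby reduces to proving that $(U_\natural)^+$ is a rational seed. That last step is carried out by a fresh reduction to characteristic~$p$: one identifies $(U_\natural)^+$ with $\ulim_w\,(U_w)^+$ inside a Lefschetz field $L_\natural$ and then invokes \cite[Theorem~6.9]{D07} to see that almost all $(U_w)^+$ are seeds over $R_w$. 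Your approach bypasses both the tensor-product diagram and the explicit identification of $(U_\natural)^+$ as an ultraproduct; the characteristic~$p$ content is fully absorbed into Theorem~\ref{ultranice}, exactly as you note. The trade-off is that the paper's argument establishes, as a by-product, the identification $(U_\natural)^+\cong\ulim_w\,(U_w)^+$, which is of some independent interest, whereas your argument is shorter and uses only elementary field theory once the target $B_\natural$ is in hand.
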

\begin{proof}
Start with the integral extension $S\incl T$. We will expand this map into the diagram below, where $I$ will be an ideal of $S$ and $U_\natural$ will be an ultraseed domain. We will show that all vertical maps are integral extensions.
$$
\xymatrix{
S\ar@{^{(}->}[d]\ar[r] & S/I\ar@{^{(}->}[d] \ar@{^{(}->}[r] & U_\natural\ar@{^{(}->}[d] \\
T\ar[r] & T/IT \ar[r] & T/IT \otimes_{S/I}  U_\natural
}
$$
Indeed, by Theorem~\ref{ultranice} the rational seed $S$ maps to an ultraseed domain $U_\natural$. (It actually has far more properties, but for this argument we only need it to be an ultraseed and a domain.) 
Let $S/I$ be the homomorphic image of $S$ within the domain $U_\natural$, which implies that $S/I$ is a domain and so $I$ is a prime ideal of $S$. As $S\incl T$ is integral, the induced map $S/I \to T/IT$ will still be integral. As $I$ is a prime ideal (and hence integrally closed) and $T$ is an integral extension of $S$, we have $IT\cap S = I$ and so $S/I \to T/IT$ remains injective as well.
The existence of the two maps to $T/IT \otimes_{S/I}  U_\natural$ are then clearly established, but we need to show that the rightmost vertical map is an integral extension. 
The map is integral as it is a base change of the integral map $S/I\to T/IT$ (or $S\to T$). 
For injectivity, we appeal to the general lemma \cite[Lemma 6.2]{D07}, which proves that given the integral extension $S/I\to T/IT$ and the domain (reduced is sufficient) extension $S/I \to U_\natural$, the vertical map on the right is also injective.

In order to prove the theorem, it now suffices to prove that the ring $T/IT \otimes_{S/I}  U_\natural$ is a rational seed. Thus, we may reduce the whole problem to the case that $S=U_\natural$ is an ultraseed domain, and $T= (U_\natural)^+$, the absolute integral closure of $U_\natural$ within an algebraic closure $L_\natural$ of the fraction field of $U_\natural$. By \cite[Remark 2.4.4]{Sch10}, $L_\natural$ is also Lefschetz.

For almost all $w$, we have that $U_w$ is a domain and a seed over $R_w$, a complete local domain of \charp, and $L_w$ is an algebraic closure of the fraction field of $U_w$. Thus, for almost all $w$ we have the inclusions:
$$
U_w \incl (U_w)^+ \incl L_w.
$$
By \cite[Theorem 6.9]{D07}, almost all $(U_w)^+$ will be seeds over $R_w$ as they are integral extensions of the seeds $U_w$. Therefore, $V = \ulim((U_w)^+)$ is an ultraseed over $R$. To finish the proof, we claim that $V = (U_\natural)^+$ within $L_\natural$. 

Indeed, let $v_\natural\in V$. Then almost all approximations $v_w$ live in $(U_w)^+$ and satisfy a monic polynomial over $U_w$. Taking the ultralimit of the polynomials and applying \Los' Theorem shows that $v_\natural$ is the root of a monic polynomial over $U_\natural$ and so lives in $(U_\natural)^+$.
Conversely, let $t\in (U_\natural)^+$. As $t\in(U_\natural)^+\subseteq  L_\natural$, we have $t = \ulim\ t_w$ where  $t_w\in L_w$. Also, $t$ is the root of a monic polynomial over $U_\natural$, so another application of \Los' Theorem shows that almost all $t_w$ are roots of the corresponding approximated monic polynomials over $U_w$. Therefore, almost all $t_w$ live in $(U_w)^+$ within $L_w$, and so $t$ lives in $V=\ulim((U_w)^+)$ within $L_\natural$. 
\end{proof}

We can then use this result to deduce an \echarz\ result about the absolute integral closure $R^+$ of $R$, independent of Hochster and Huneke's result that $R^+$ is a balanced big \CM\ algebra in characteristic $p$. 

\begin{cor}
Let $R$ be a local domain of \echarz. Then the absolute integral closure $R^+$ of $R$ is a rational seed over $R$.
\end{cor}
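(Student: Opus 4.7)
The plan is to combine two results already established in the paper. First, recall that the existence theorem for rational seeds shows that $R$ itself is a rational seed over $R$, since the structural map $R\to\mathfrak{D}(R)$ exhibits the Lefschetz hull as an ultraseed whose components are (almost all) complete local domains of characteristic $p$, which admit balanced big Cohen-Macaulay algebras by the classical results of Hochster-Huneke. Thus the base case of rational seed is immediately available.

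Next, I would invoke the main theorem of this section, which states that integral extensions of rational seeds are rational seeds. Since $R^+$ is by definition the integral closure of $R$ inside a fixed algebraic closure of its fraction field, the inclusion $R\incl R^+$ is an integral extension. Applying the theorem with $S=R$ and $T=R^+$ immediately yields that $R^+$ is a rational seed over $R$.

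There is essentially no obstacle here: the corollary is a direct specialization of the integral extension theorem to the universal integral extension $R^+$, with the base rational seed taken to be $R$ itself. The work of interpreting $R^+$ compatibly with the ultraproduct framework (via the identification $(U_\natural)^+ = \ulim (U_w)^+$) has already been carried out inside the proof of the integral extension theorem, so nothing further is needed. I would simply write this as a two-sentence proof citing the existence of rational seeds and the integral extensions theorem.
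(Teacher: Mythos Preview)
Your proposal is correct and matches the paper's intended argument exactly: the corollary is stated without proof immediately after the integral extension theorem, and the intended deduction is precisely to combine the fact that $R$ is a rational seed with the integral extension theorem applied to $R\hookrightarrow R^+$. Your remark that the ultraproduct compatibility for absolute integral closures was already handled inside the proof of that theorem is also on point.
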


\section{Applications to tight closure in \echarz}

Using Theorems~\ref{tensorseeds} and \ref{basechange}, we can define a closure operation on \echarz\ rings using balanced big Cohen-Macaulay algebras that are ultrarings formed from balanced big \CM\ $R_w$-algebras. 

\begin{defn}
\label{ourclosureop}
Let $R$ be a local domain of \echarz\  with Lefschetz hull $\mathfrak{D}(R)$  and \charp\ approximations $R_w$ as in Discussion \ref{hypoth}. Let $N\subseteq M$ be $R$-modules. For $u\in M$, we say that $u\in N_M^\cl$ if $1\otimes u \in \im(B_\natural\otimes_R N\to B_\natural\otimes_R M)$ for some balanced big \CM\ $R$-algebra $B_\natural$ that is an ultraproduct of balanced big \CM\ $R_w$-algebras. 
\end{defn}

As tight closure in \charp\ is equivalent to extension and contraction from some balanced big \CM\ algebra (\cite[Theorem 11.1]{Ho94}), the operation above provides a way to transfer tight closure from \charp\ to \echarz. In \cite[Definition 5.2]{Sch03}, Schoutens defines \textit{generic tight closure}, which is similar to the operation above as both rely on the approximations of an element residing in \charp\ tight closure for almost all $p$. However, they may not be equal. We do have the following:

\begin{prop}
Let $R$ be a local domain of \echarz\ with \charp\ approximations $R_w$ as in Discussion \ref{hypoth}. We can extend generic tight closure to \fg\ $R$-modules as in \cite[Lemma 7.0.5]{E12}. Then the closure operation cl given in Definition \ref{ourclosureop} is contained in generic tight closure, i.e. for any \fg\ $R$-modules $N \subseteq M$, $N_M^\cl \subseteq N_M^g$, where $g$ denotes generic tight closure. 
\end{prop}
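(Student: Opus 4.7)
The plan is to unfold the hypothesis $u\in N_M^{\cl}$, descend the witnessing tensor relation to almost all characteristic $p$ approximations via Lemma~\ref{ultratensor}, and then invoke Hochster's characterization of tight closure by balanced big Cohen-Macaulay algebras.

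Suppose $u\in N_M^{\cl}$, witnessed by a balanced big Cohen-Macaulay $R$-algebra $B_\natural=\ulim_w B_w$ whose components $B_w$ are almost all balanced big Cohen-Macaulay $R_w$-algebras. By definition, there exist $b_1,\ldots,b_s\in B_\natural$ and $n_1,\ldots,n_s\in N$ such that $1\otimes u = \sum_{i=1}^s b_i\otimes n_i$ in $B_\natural\otimes_R M$. Because $B_\natural$ is naturally a $\mathfrak{D}(R)$-algebra and $N,M$ are finitely generated $R$-modules, we have $\mathfrak{D}(R)\otimes_R N \cong \ulim_w N_w$ and $\mathfrak{D}(R)\otimes_R M \cong \ulim_w M_w$ (cf.\ \cite[Proposition~1.2]{AS07}), where $N_w$ and $M_w$ denote the $R_w$-approximations of $N$ and $M$. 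Rewriting the tensor products $B_\natural\otimes_R N$ and $B_\natural\otimes_R M$ over $\mathfrak{D}(R)$ and applying the canonical map of Lemma~\ref{ultratensor} (functorially for the inclusion $N\hookrightarrow M$), we send our relation to the identity $\ulim_w(1\otimes u_w) = \ulim_w\sum_{i=1}^s b_{i,w}\otimes n_{i,w}$ inside $\ulim_w(B_w\otimes_{R_w}M_w)$.

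By \Los' theorem, this means $1\otimes u_w = \sum_i b_{i,w}\otimes n_{i,w}$ holds in $B_w\otimes_{R_w}M_w$ for almost all $w$. In particular, for these $w$, the element $1\otimes u_w$ lies in the image of $B_w\otimes_{R_w}N_w\to B_w\otimes_{R_w}M_w$, with $B_w$ a balanced big Cohen-Macaulay $R_w$-algebra. By Hochster's characterization of tight closure via balanced big Cohen-Macaulay algebras in prime characteristic \cite[Theorem~11.1]{Ho94}, we conclude $u_w\in (N_w)^*_{M_w}$ for almost all $w$, which is precisely the condition defining $u\in N_M^g$ under the extension of generic tight closure to finitely generated modules from \cite[Lemma~7.0.5]{E12}.

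The main obstacle is purely bookkeeping: turning a single relation $1\otimes u = \sum_i b_i\otimes n_i$ in $B_\natural\otimes_R M$ into a componentwise relation for almost all $w$. This rests on identifying $\mathfrak{D}(R)\otimes_R M$ with $\ulim_w M_w$ for finitely generated $M$, together with the canonical (possibly non-injective, non-surjective) map of Lemma~\ref{ultratensor} -- whose mere existence suffices here, since any module homomorphism preserves finite linear relations and that is all we need in order to transport the image condition from $B_\natural$ down to almost all $B_w$.
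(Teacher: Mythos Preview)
Your argument is correct and follows essentially the same route as the paper: both pass from $B_\natural\otimes_R Q$ through $B_\natural\otimes_{\mathfrak{D}(R)}Q_\natural$ to $\ulim_w(B_w\otimes_{R_w}Q_w)$ via Lemma~\ref{ultratensor}, descend the image condition to almost all $w$, and finish with \cite[Theorem~11.1]{Ho94}. The only cosmetic differences are that the paper phrases the descent as a commutative square rather than tracking an explicit relation $\sum_i b_i\otimes n_i$, and that the componentwise equality you attribute to \Los\ is really just the definition of the ultraproduct (an element is zero iff almost all approximations are zero).
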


\begin{proof}
To show that generic tight closure can be extended to \fg\ $R$-modules, it suffices to note that both ultraproducts and tight closure commute with finite direct sums \cite[Lemma 7.0.5]{E12}. 

Let $N \subseteq M$ be \fg\ $R$-modules. By \cite[Section 2.5]{Sch05}, they have approximations $N_w$ and $M_w$, respectively, with ultraproducts $N_\natural = \ulim\ N_w$ and $M_\natural = \ulim\ M_w$. Suppose that $u \in N_M^\cl$. Then there is some balanced big \CM\ $R$-algebra $B_\natural$ that is an ultraring, and such that 
\[1 \otimes u \in \im(B_\natural \otimes_R N \to B_\natural \otimes_R M).\]
 By Lemma \ref{ultratensor}, for any \fg\ $R$-module $Q$, we have maps 
 $$
 B_\natural \otimes_R Q \to B_\natural \otimes_{\mathfrak{D}(R)} Q_\natural \to \ulim_w\ B_w \otimes_{R_w} Q_w.
$$
 In particular, we have the following diagram:
$$
\xymatrix{
B_\natural \otimes_R N \ar[d]\ar[r] & B_\natural \otimes_R M \ar[d] \\
\ulim\ B_w \otimes_{R_w} N_w \ar[r] & \ulim\ B_w \otimes_{R_w} M_w}
$$
Since $1 \otimes u$ is in the image of the first horizontal map, $\ulim\ 1_w \otimes u_w$ is in the image of the lower horizontal map. Hence for almost every $w$, \[1_w \otimes u_w \in \im(B_w \otimes N_w \to B_w \otimes M_w).\] 
Hence, for almost all $w$, we have $u_w\in (N_w)^*_{M_w}$ over $R_w$ by \cite[Theorem 11.1]{Ho94}. This implies that $u \in N_M^g$, as desired.
\end{proof}

\begin{remk}
The reverse inclusion of above is still murky. The stumbling block to a proof is that as $B_\natural$ is not finitely generated, for a finitely generated $R$-module $Q$ we only have known maps $B_\natural \otimes_R Q \to \ulim_w\ B_w \otimes_{R_w} Q_w$, but we would need the reverse directions to prove that $N_M^g \subseteq N_M^\cl$. 
\end{remk}

As any two $B_\natural$ as above map to a common balanced big \CM\ algebra (Theorem \ref{tensorseeds}) and we have a base change property 
(Theorem \ref{basechange}), the closure operation $\cl$ above can be shown to have a host of nice properties, including: persistence, colon-capturing, phantom acyclicity,  and triviality of all closures over regular rings. Given Theorems \ref{tensorseeds} and \ref{basechange}, the proofs of these claims (and more) can be found in \cite[Chapter 7]{Dth}.

Further study of the properties of this closure operation is merited.

\section*{Acknowledgments}
Our thanks to Neil Epstein, Mel Hochster, and Hans Schoutens who each took the time to read initial drafts of this work.
We also thank the anonymous referee for many helpful suggestions that improved the exposition of the paper.

\end{document}